\documentclass[11pt]{amsart}

\usepackage{amsmath}

\usepackage{amsfonts}

\usepackage{amssymb}

\setlength{\oddsidemargin}{+5pt}

\setlength{\evensidemargin}{+5pt}

\setlength{\textwidth}{150mm}

\numberwithin{equation}{section}

\newtheorem{lem}{Lemma}[section]

\newtheorem{teo}[lem]{Theorem}

\newtheorem{defi}[lem]{Definition}

\newtheorem{oss}[lem]{Remark}

\renewcommand{\phi}{\varphi}

\newcommand{\R}{{\mathbb{R}}}

\newcommand{\bbR}{{\mathbb{R}}}

\newcommand{\esssup}{{\mathrm{esssup}}}

\newcommand{\average}{{\mathchoice {\kern1ex\vcenter{\hrule height.4pt

width 6pt

depth0pt} \kern-9.7pt} {\kern1ex\vcenter{\hrule height.4pt width 4.3pt

depth0pt}

\kern-7pt} {} {} }}

\newcommand{\eps}{\epsilon}

\title[Rigidity results in Diffusion Markov Triples]{
Rigidity results in Diffusion Markov Triples}

\thanks{ 
The authors are members of {\em Gruppo Nazionale per l'Analisi
Ma\-te\-ma\-ti\-ca, la Probabilit\`a e le loro Applicazioni} (GNAMPA)
of the {\em Istituto Nazionale di Alta Matematica} (INdAM). Supported by
the Australian Research Council Discovery Project Grant ``N.E.W. Nonlocal Equations
at Work''.}

\author{Serena Dipierro, Andrea Pinamonti, Enrico Valdinoci}

\address{Dipartimento di Matematica ``Federigo Enriques'',
Universit\`a degli studi di Milano,
Via Saldini 50, 20133 Milano (Italy).}

\email{serena.dipierro@unimi.it}

\address{Dipartimento di Matematica, Universit\`a di Trento,
Via Sommarive 14, 38123 Povo, Trento (Italy).}

\email{andrea.pinamonti@gmail.com}

\address{Dipartimento di Matematica ``Federigo Enriques'',
Universit\`a degli studi di Milano,
Via Saldini 50, 20133 Milano (Italy), and
School of Mathematics and Statistics,
University of Melbourne, Grattan Street, 
Parkville, VIC-3010 Melbourne (Australia), and
Istituto di Matematica Applicata e Tecnologie Informatiche,
Via Ferrata 1, 27100 Pavia (Italy).
}

\email{enrico.valdinoci@unimi.it}

\begin{document}

\begin{abstract}
We consider stable solutions of semilinear
equations in a very general setting.
The equation is set on a Polish topological space
endowed with a measure and the linear operator is induced by
a carr\'e du champs (equivalently, the equation is set in a
diffusion Markov triple).

Under suitable curvature dimension conditions, we establish that
stable solutions with integrable carr\'e du champs are necessarily constant
(weaker conditions characterize the structure
of the carr\'e du champs and 
carr\'e du champ it\'er\'e).

The proofs are based on a geometric Poincar\'e formula in this setting.
{F}rom the general theorems established, several previous
results are obtained as particular cases and new ones are provided as well.
\end{abstract}

\maketitle
%\tableofcontents

\section{Introduction}  
A quasilinear equation is an expression of the form
\begin{equation}\label{EQUAFP}
Lu+F(u)=0.
\end{equation}
Here, $L$ is a linear operator and identity~\eqref{EQUAFP}
often turns out to provide a significant constraint for the solution~$u$:
namely, at any point~$x$, the operator~$Lu$ at~$x$ has to be perfectly
balanced by the nonlinear source~$F(u(x))$ and, as a consequence,
the operator is constant along the level sets of the solution.

It is conceivable that this rigid constraint implies suitable classification
results: for this, one has typically to consider problems
in which~\eqref{EQUAFP} arises from a variational structure
and focus on solutions with
``sufficiently small energy'', since high energy solutions may develop
some ``wild behavior''.
To this end, one often considers solutions which are local minimizers
of the energy functional. Nevertheless, this minimal property may be
uneasy to check in practice and it is therefore customary to look at
a more general class of solutions, the so called ``stable'' solutions,
for which the second derivative of the energy functional is nonnegative
(in this setting, local minimizers become a special subclass of
the stable solutions). We refer to the monograph~\cite{DUPAI}
for a throughout discussion on stable solutions and on several classification
results.
\medskip

After \cite{FarHab, FSV},
a very useful tool towards the classification
of stable solutions
has been provided by a series of geometric Poincar\'e inequalities,
originally introduced by
Sternberg and Zumbrun in the celebrated articles~\cite{SZ1} and~\cite{SZ2}.
Roughly speaking, in this approach
a weighted $L^2$-norm of any test function
is controlled by a weighted $L^2$ norm of its
gradient. The advantage of this method
is that the weights are nonnegative
and possess a geometric interpretation,
hence the possible vanishing of the integral
in the Poincar\'e-type inequalities
implies the vanishing of the corresponding geometric weight, which
in turn provides a series of useful geometric rigidities.
Rigidity results via Poincar\'e-type inequalities
have been recently obtained in different settings,
including, among the others,
systems of equations~\cite{DI, fazly, FAZSI1},
manifolds~\cite{fsv1, fsv2, FMV, punzo1, FAZ17, DPV-VAR},
stratified groups~\cite{FV1, PV, BIRI10, FP},
equations with drift~\cite{FNP},
stratified media~\cite{SAVV, CHE, DP2}
and fractional equations~\cite{FRAC1, FRAC2, DP1},
and there are also applications for
equations in infinite dimensional spaces~\cite{CNV}.
The method can be also applied to deduce new
weighted Poincar\'e inequalities from
the explicit knowledge of a stable solution,
see~\cite{INDI}, and it is also flexible enough to deal with Neumann
boundary conditions~\cite{punzo2, DPV}.
\medskip

In this framework, a special role is often played by the geometry
of the ambient space. To understand this phenomenon, one can
think about the case of one-dimensional solutions in the Euclidean flat space
for a bistable nonlinearity, for instance heteroclinic solutions
of the mechanical pendulum.
If one wants to ``bend'' these objects to construct solutions on a sphere,
a geometric difficulty arises from the fact that the asymptotics ``at the
point infinity'' is not well-determined, thus making it difficult to perform
such a bending operation. This very heuristic example suggests
that for ``curved'' manifolds the number and the structure
of the stable solutions could be very different from the flat case.\medskip

The main objective of this paper is to deal
with stable solutions of semilinear equations in the very general
setting provided by the Markov triples, see the monograph~\cite{BLG}.
Though we have not attempted to reach the widest generality,
the setting of
Markov triples is an excellent
setting that comprises several particular cases at once,
by developing an appropriate form of calculus
and often providing
a general, elegant and unified treatment.
Moreover, the setting of Markov triples finds applications
in probability and mathematical physics, e.g. to describe
quantum ensembles, see e.g.~\cite{CHENG} and the references therein.
\medskip

The environment provided by the Markov triple (described here in details in
Section~\ref{89237ef2S}) is that of a Polish space endowed by a measure
and a carr\'e du champ operator, which provide a variational framework
for a general form of equation~\eqref{EQUAFP}.
See e.g.~\cite{HIR, BOU, BH} for a classical introduction to these
kinds of Dirichlet forms and also~\cite{AmbTre}
and the references therein for recent developments.\medskip

The main result of this paper (stated in details in Theorem~\ref{main1})
is that stable solutions with integral carr\'e du champ
in environments with suitable geometric assumptions
satisfy additional rigidity properties. Roughly speaking,
under strong curvature assumptions, these solutions are necessarily constant
and under weak curvature assumptions their level sets need to satisfy suitable
geometric identities.

These types of results will be obtained here as a consequence
of a very general Poincar\'e-type inequality (given explicitly
in Theorem~\ref{eqnlin}).\medskip

The rest of this paper is organized as follows.
In Section~\ref{89237ef2S}, we introduce in details
the setting of Markov triples in which we work and we state
our main results, the proofs
of which are given in Section~\ref{0pqwdlfjvsw3hb}.

Then, in Section~\ref{oqidwuwiefyetytyyt},
we deduce several particular cases from our main results
(some of these results were already known in the literature,
but follow here as a byproduct of our general and unified approach;
some other results seem to be new to the best of our knowledge).

\section{Functional setting and main results}\label{89237ef2S}

A triple  $(X,\mu,\Gamma)$ is called a \emph{diffusion Markov triple} if it
is composed by a Polish topological space $(X,\tau)$ (i.e. a separable completely
metrizable topological space~$X$ with topology~$\tau$) endowed with a $\sigma-$finite Borel measure $\mu$ with full support, a class $\mathcal{A}_0$ of real-valued measurable functions on $X$ and a symmetric bilinear map (the \emph{carr\'e du champ}) such that
$\Gamma:\mathcal{A}_0\times\mathcal{A}_0\to \mathcal{A}_0$ satisfying the following conditions:

\begin{enumerate}
	\item  $\mathcal{A}_0\subset L^{1}(\mu)$ is a vector space, dense in every $L^p(\mu)$, $1\leq p<\infty$, such that
	\[
	\forall f,g\in\mathcal{A}_0\Longrightarrow fg\in \mathcal{A}_0
	\]
	and
	\[
	\forall f_1,\ldots , f_k\in \mathcal{A}_0,\ \Psi\in C^{\infty}(\mathbb{R}^k),\ \Psi(0)=0\Longrightarrow \Psi(f_1,\ldots, f_k)\in \mathcal{A}_0,
	\]
	\item The map $\Gamma$ is bilinear, symmetric and such that, for any $f\in\mathcal{A}_0$, it holds that
$$\Gamma(f):=\Gamma(f,f)\geq 0.$$
Moreover, for any
$f_1,\ldots , f_k,g\in \mathcal{A}_0$, and any smooth $C^{\infty}$ function $\Psi : \mathbb{R}^k\to \mathbb{R}$ such that $\Psi(0)=0$, we have that~$
\Psi(f_1,\ldots, f_k)\in\mathcal{A}_0$ and
	\[
	\Gamma(\Psi(f_1,\ldots, f_k), g)=\sum_{i=1}^k \partial_i\Psi(f_1,\ldots, f_k)\Gamma(f_i,g)\qquad \mu-\mbox{a.e.}
	\]
	\item For every $f\in \mathcal{A}_0$, there is $C=C(f)>0$ such that for every $g\in\mathcal{A}_0$
	\[
	\left| \int_X \Gamma(f,g)\, d\mu\right| \leq C\|g\|_{L^2(\mu)}.
	\]
	The Dirichlet form $\mathcal{E}$ is defined for every $(f,g)\in\mathcal{A}_0\times\mathcal{A}_0$ by
	\[
	\mathcal{E}(f,g):=\int_X \Gamma(f,g)\, d\mu.
	\]
	The domain $\mathcal{D}(\mathcal{E})$ of $\mathcal{E}$ is the completion of $\mathcal{A}_0$ with respect to the norm 
$$\|f\|_{\mathcal{E}}:=(\|f\|_{L^2(\mu)}^2+\mathcal{E}(f,f))^{1/2}.$$ The Dirichlet form $\mathcal{E}$ is extended to $\mathcal{D}(\mathcal{E})$ by continuity together with the map $\Gamma$.
	\item $L$ is a linear operator on $\mathcal{A}_0$ defined as
	\[
	\int_X g Lf\, d\mu=-\int_X \Gamma(f,g)\, d\mu
	\]
	for all $f,g\in\mathcal{A}_0$. The domain of the operator $L$, $\mathcal{D}(L)$, is defined as the set of $f\in \mathcal{D}(\mathcal{E})$ for which there exists a constant $C=C(L)>0$ such that for any $g\in \mathcal{D}(\mathcal{E})$
	\[
	|\mathcal{E}(f,g)|\leq C\|g\|_{L^2(\mu)}.
	\]
	On $\mathcal{D}(L)$, the operator $L$ is extended via integration by parts formula for every $g\in\mathcal{D}(\mathcal{E})$. The operator $L$ defined on $\mathcal{D}(L)$ is always self-adjoint.
	\item There exists an increasing sequence $(\xi_k)_{k\geq 1}\subset \mathcal{A}_0$
	of functions such that~$\xi_k(x)\in[0,1]$ for any~$x\in X$, and
	\begin{equation*}
	\lim_{k\to+\infty} \xi_k=\mathrm{1}\qquad \mu-\mbox{a.e. in $X$}
	\end{equation*}
	and
	\begin{equation}\label{xi2}
	\Gamma(\xi_k)\leq \frac{1}{k}\qquad k\geq 1.
	\end{equation}
\end{enumerate}
We also assume the existence of an algebra $\mathcal{A}_0\subset \mathcal{A}$ of measurable functions
on $E$, containing the constant functions and satisfying the following requirements:
\begin{enumerate}
	\item[(1')] Whenever $f\in\mathcal{A}$ and $h\in\mathcal{A}_0$, $hf\in\mathcal{A}_0$;
	\item[(2')] For any $f\in\mathcal{A}$, if 
\[\int_E hf d\mu \geq 0\qquad \mbox{for every positive}\qquad h\in \mathcal{A}_0,\]
 then $f\geq 0,$\\
\item[(3')] $\mathcal{A}$ is stable under composition with smooth $C^{\infty}$-functions $\Psi: \mathbb{R}^k\to \mathbb{R}$ with~$\Psi(0)=0$,
namely if~$f_1,\dots,f_k\in\mathcal{A}$ then~$\Psi(f_1,\dots,f_k)\in
\mathcal{A}$. Furthermore,
for all~$f_1,\dots,f_k,g\in\mathcal{A}$,
it holds that
\begin{equation}\label{9090}
\Gamma(\Psi(f_1,\ldots, f_k), g)=\sum_{i=1}^k \partial_i\Psi(f_1,\ldots, f_k)\Gamma(f_i,g)\qquad \mu-\mbox{a.e.}
\end{equation}
\\
\item[(4')] The operator $L : \mathcal{A}\to \mathcal{A}$ is an extension of $L$ on $\mathcal{A}_0$. The carr\'e du champ
operator $\Gamma$ is also defined on $\mathcal{A}\times\mathcal{A}$ by the formula, for every $(f, g) \in \mathcal{A}\times\mathcal{A}$,
\begin{equation}\label{f per g}
\Gamma(f,g)=\frac{1}{2}\left[L(fg)-fLg-gLf\right]\in\mathcal{A},
\end{equation}
and for any~$f\in\mathcal{A}$ we set~$\Gamma(f):=\Gamma(f,f)$,
\item[(5')] For every $f\in\mathcal{A}$, $\Gamma(f,f)\geq 0$,\\
\item[(6')] For every $f\in\mathcal{A}$ and $g\in\mathcal{A}_0$, the integration by part formula
\[
\int_X \Gamma(f,g)\, d\mu=-\int_X g Lf d\mu=-\int_X f Lg\, d\mu
\]
holds true.
\item[(7')] If $f\in\mathcal{A}$ is such
that $\Gamma(f)=0$ then $f$ is constant.
\end{enumerate}

\begin{oss} {\rm
The assumption on the sign of~$\Gamma(f)$ in~$(5')$
and the nondegeneracy
condition in~$(7')$
are important and nontrivial structural
assumptions.
Roughly speaking, they reflect the ``ellipticity’’ of the operator~$L$.
For instance, if, for~$\R^2\ni(x,y)\mapsto f(x,y)$,
one considers the d'Alembert
operator~$
Lf=f_{xx}-f_{yy}$, one obtains
$$ \Gamma(f) = \frac12\,L(f^2) - f Lf
= ( (f_x)^2 +  f f_{xx} ) - ( (f_y)^2 +  f f_{yy} ) - f ( f_{xx}-f_{yy} )
=  (f_x)^2 - (f_y)^2 ,$$
which has indefinite sign.

In addition, if, for~$\R\ni x\mapsto f(x)$,
one considers the derivative operator~$Lf = f_x$, then it follows that
$$ \Gamma(f) = \frac12\,L(f^2) - f Lf
=\frac12\,(f^2)_x - ff_x = 0,$$
therefore there are nontrivial operators producing carr\'e du champs
which vanish identically.

It is interesting to point out that operators in ``divergence''
and ``nondivergence'' form share the same
carr\'e du champs. For instance, if~$a_{ij}$ is a smooth symmetric
matrix, and
\begin{equation}\label{DIVandNONDIV} L_{D}f:= \sum_{i,j=1}^n (a_{ij} f_i)_j
\quad{\mbox{ and }}\quad
L_{ND}f:= \sum_{i,j=1}^n a_{ij} f_{ij},\end{equation}
a direct computation shows that both~$L_D$ and~$L_{ND}$
satisfy
$$ \Gamma(f,g)=\sum_{i,j=1} a_{ij} \,f_i\,g_j.$$
Remarkably, the difference between the operators~$L_D$ and~$L_{ND}$
is read, in our setting, by condition~(6'), which
is satisfied by~$L_D$ and not by~$L_{ND}$. That is, in a sense,
while
conditions~(5') and~(7') reflect an elliptic condition
into a positive definiteness of an associated quadratic form,
condition~(6') detects the variational structure of the associated operator.
}
\end{oss}

\begin{oss}{\rm
Taking $\Psi(x,y)=xy$ in \eqref{9090}
we find that
\begin{equation}\label{9191}
\Gamma(f_1f_2, g)= f_1 \Gamma(f_2, g) + f_2\Gamma(f_1, g)\qquad \mu-\mbox{a.e.}
\end{equation}
for any $f_1,f_2, g\in \mathcal{A}$.

In addition, from~\eqref{9191}, we infer that, for any~$f,g\in
\mathcal{A}$,
\begin{equation}\label{9292}
\Gamma(f^2, g^2)= 
2f\Gamma(f, g^2)=
2f\Gamma(g^2,f)=4fg
\Gamma(g,f)=4fg
\Gamma(f,g)
\qquad \mu-\mbox{a.e.}
\end{equation}

Moreover,
exploiting~\eqref{9191} with~$f_1=f_2=g=1$, we see that
$$ \Gamma(1)= \Gamma(1) + \Gamma(1)\qquad \mu-\mbox{a.e.}$$
and therefore
\begin{equation*}
\Gamma(1)=0\qquad \mu-\mbox{a.e.}\end{equation*}
Using this identity and formula~\eqref{f per g} with~$f=g=1$, we also infer that
$$ 0=\Gamma(1)=\frac{1}{2}\left[L(1)-L(1)-L(1)\right]
=-\frac{L(1)}{2},$$
and so
\begin{equation}\label{and sp}
L(1)=0\qquad \mu-\mbox{a.e.}\end{equation}
}\end{oss}

To detect the behavior of second derivative operators,
it is also classical to introduce the following notation.

\begin{defi}\label{defGamma2}
The \emph{carr\'e du champ it\'er\'e} is the bilinear form $\Gamma_2:\mathcal{A}\times\mathcal{A}\to \mathcal{A}$ defined as
\begin{equation}\label{GammaD2}
\Gamma_2(f,g):=\frac{1}{2}\left[L\Gamma(f,g)-\Gamma(f, Lg)-\Gamma(g,Lf)\right].
\end{equation}
We define $\Gamma_2(f):=\Gamma_2(f,f)$.
\end{defi}

As an example, we point out that when~$L$ is the Laplace operator
in~$\R^n$, then the carr\'e du champ it\'er\'e reduces to the square
of the norm of the Hessian matrix
(see also Appendix~C.5 in~\cite{BLG}
for more general formulas
for Riemannian manifolds).

Now we recall a classical notion of
curvature dimension condition in our setting:

\begin{defi}\label{DEF2.3}
We say that $(X,\mu,\Gamma)$ satisfies the $CD(K,\infty)$ condition, for some $K\in\mathbb{R}$, if for any $f\in\mathcal{A}$
\[
\Gamma_2(f)\geq K\Gamma(f)
.\]
\end{defi}
The following result will be crucial
in our setting (see page~\pageref{QUI}). For the proof
of it, see \cite[formula~(3.3.6)]{BLG}.

\begin{teo}\label{QUI0}
Assume that~$(X,\mu,\Gamma)$ satisfies the $CD(K,\infty)$
condition in Definition~\ref{DEF2.3}, for some~$K\in\mathbb{R}$. Then
\begin{align}\label{Dis}
4\Gamma(f)\left(\Gamma_2(f)-K\Gamma(f)\right)\geq \Gamma(\Gamma(f))
\end{align}
for every $f\in \mathcal{A}$.
\end{teo}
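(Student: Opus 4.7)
The plan is to apply the $CD(K,\infty)$ condition to the composition $\Psi(f)$ for an arbitrary smooth $\Psi\in C^\infty(\R)$, exploiting the chain-rule properties of $L$ and $\Gamma$ encoded in the diffusion structure. From the Leibniz rule $L(uv)=uLv+vLu+2\Gamma(u,v)$ (obtained by rearranging~\eqref{f per g}), the chain rule $L(\Psi(f))=\Psi'(f)Lf+\Psi''(f)\Gamma(f)$, and the chain rule $\Gamma(\Psi(f),g)=\Psi'(f)\Gamma(f,g)$ from~\eqref{9090}, a direct computation of $\Gamma_2(\Psi(f))=\tfrac{1}{2}[L\Gamma(\Psi(f))-2\Gamma(\Psi(f),L\Psi(f))]$ yields the key identity
\[
\Gamma_2(\Psi(f))=\Psi'(f)^2\,\Gamma_2(f)+\Psi''(f)^2\,\Gamma(f)^2+\Psi'(f)\Psi''(f)\,\Gamma(f,\Gamma(f)).
\]

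Applying $CD(K,\infty)$ to $\Psi(f)\in\mathcal A$ and subtracting $K\Gamma(\Psi(f))=K\Psi'(f)^2\Gamma(f)$ produces, at every point of $X$, a nonnegative quadratic form in the pair $(\Psi'(f),\Psi''(f))$. Since $\Psi$ is arbitrary and at any fixed $x\in X$ the values $\Psi'(f(x))$ and $\Psi''(f(x))$ can be prescribed to be any pair of reals $(a,b)$ (for instance by taking $\Psi(r)=ar+\tfrac{1}{2}b(r-f(x))^2$), the nonpositivity of the discriminant yields the Cauchy-Schwarz-type bound
\[
\Gamma(f,\Gamma(f))^2\leq 4\,\Gamma(f)^2\bigl(\Gamma_2(f)-K\Gamma(f)\bigr).
\]

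To upgrade this to the sharper claim $\Gamma(\Gamma(f))\leq 4\Gamma(f)(\Gamma_2(f)-K\Gamma(f))$, one invokes the diffusion identity $\Gamma(u^2)=4u^2\Gamma(u)$ (a specialization of~\eqref{9292}) applied to $u=\sqrt{\Gamma(f)}$; on the set $\{\Gamma(f)>0\}$ this rewrites the target as the reinforced curvature-dimension estimate $\Gamma(\sqrt{\Gamma(f)})\leq \Gamma_2(f)-K\Gamma(f)$. The main obstacle lies precisely in this final step: the naive Cauchy-Schwarz for $\Gamma$, namely $\Gamma(f,\Gamma(f))^2\leq \Gamma(f)\Gamma(\Gamma(f))$, controls $\Gamma(\Gamma(f))$ only from \emph{below}, not from above, so the discriminant inequality alone does not close the argument. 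The correct derivation, carried out in~\cite[formula~(3.3.6)]{BLG}, applies the chain-rule expansion of $\Gamma_2$ to $\sqrt{\Gamma(f)+\epsilon}$ and combines it with the quadratic-form analysis above, taking $\epsilon\to 0^+$ to handle the possible vanishing of $\Gamma(f)$.
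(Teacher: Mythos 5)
You correctly derive the single-variable change-of-variable identity $\Gamma_2(\Psi(f)) = \Psi'(f)^2\Gamma_2(f) + \Psi''(f)^2\Gamma(f)^2 + \Psi'(f)\Psi''(f)\Gamma(f,\Gamma(f))$, and nonnegativity of the resulting quadratic form under $CD(K,\infty)$ indeed yields $\Gamma(f,\Gamma(f))^2\leq 4\Gamma(f)^2\bigl(\Gamma_2(f)-K\Gamma(f)\bigr)$. You also correctly observe that this is \emph{strictly weaker} than~\eqref{Dis}, because Cauchy--Schwarz gives $\Gamma(f,\Gamma(f))^2\leq\Gamma(f)\,\Gamma(\Gamma(f))$, which bounds $\Gamma(\Gamma(f))$ from below rather than above. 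So the gap you flag is genuine. (For context: the paper does not supply a proof of this theorem at all; it simply cites~\cite[formula~(3.3.6)]{BLG}.)

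Your final sentence about how the reference closes the gap is, however, not accurate. The missing ingredient is not an $\epsilon$-regularization of $\sqrt{\Gamma(f)}$, but the \emph{two-variable} change-of-variable formula for $\Gamma_2$: for $\Psi\in C^\infty(\mathbb{R}^2)$ and $(f_1,f_2)=(f,g)$,
\[
\Gamma_2(\Psi(f,g)) = \sum_{i,j}\Psi_i\Psi_j\,\Gamma_2(f_i,f_j) + 2\sum_{i,j,k}\Psi_i\Psi_{jk}\, H(f_i)(f_j,f_k) + \sum_{i,j,k,l}\Psi_{ij}\Psi_{kl}\,\Gamma(f_i,f_k)\Gamma(f_j,f_l),
\]
where $H(f)(g,h):=\tfrac12\bigl[\Gamma(g,\Gamma(f,h))+\Gamma(h,\Gamma(f,g))-\Gamma(f,\Gamma(g,h))\bigr]$. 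Applying $CD(K,\infty)$ to $\Psi(f,g)$, subtracting $K\Gamma(\Psi(f,g))$, and choosing pointwise $\Psi_1=1$, $\Psi_2=0$, $\Psi_{11}=\Psi_{22}=0$, $\Psi_{12}=\Psi_{21}=s$ gives, for every $s\in\mathbb{R}$,
\[
\bigl(\Gamma_2(f)-K\Gamma(f)\bigr) + 4s\, H(f)(f,g) + 2s^2\bigl(\Gamma(f)\Gamma(g)+\Gamma(f,g)^2\bigr)\geq 0.
\]
Since $H(f)(f,g)=\tfrac12\Gamma(g,\Gamma(f))$, the discriminant condition combined with $\Gamma(f,g)^2\leq\Gamma(f)\Gamma(g)$ yields $\Gamma(g,\Gamma(f))^2\leq 4\Gamma(f)\Gamma(g)\bigl(\Gamma_2(f)-K\Gamma(f)\bigr)$ for every $g\in\mathcal A$. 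Taking $g=\Gamma(f)$ (legitimate here because $(4')$ guarantees $\Gamma(f)\in\mathcal A$) gives $\Gamma(\Gamma(f))^2\leq 4\Gamma(f)\,\Gamma(\Gamma(f))\bigl(\Gamma_2(f)-K\Gamma(f)\bigr)$, and dividing by $\Gamma(\Gamma(f))$ where positive (the inequality being trivial where it vanishes) produces~\eqref{Dis}. In short, what your single-variable argument is missing is the freedom of a second test function $g$ via the Hessian operator $H$, not an approximation of $\sqrt{\Gamma(f)}$.
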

In this paper we study the solutions to the following boundary value problem:
\begin{align}\label{eqncomp}
L u+F(u)=0\quad  \mbox{in}\ X,
\end{align} 
where~$F\in C^{\infty}(\bbR)$.
As customary, we say that~$u$ is a weak solution to~\eqref{eqncomp}
if~$u\in \mathcal{A}$ and
\begin{align}\label{weak}
\int_{X}\Gamma(u,\varphi)\, d\mu =\int_{X} F(u)\varphi \, d\mu,
\quad {\mbox{ for any }} \varphi\in \mathcal{A}_0.
\end{align}
Moreover,
we say that a weak solution~$u$ is stable if
\begin{align}\label{hstab}
\int_{X}\Gamma(\varphi) \, d\mu- \int_{X} F'(u)\varphi^2\, d\mu\ge 0, \qquad {\mbox{ for any }}
\varphi\in \mathcal{A}_0.
\end{align} 

\begin{teo}\label{main1}
Assume that~$(X,\mu,\Gamma)$ satisfies the curvature dimension condition $CD(K,\infty)$ 
for some $K\geq 0$ and $\Gamma:\mathcal{A}\times \mathcal{A}\to \mathcal{A}$.
For any~$(x,y)\in X\times X$, let 
\begin{equation}\label{DIS}
d(x,y):=\esssup\{f(x)-f(y)\}
\end{equation}
be the distance function in $(X,\mu,\Gamma)$, where the
essential supremum is computed on bounded functions
$f\in\mathcal{A}$ with $\Gamma(f)\leq 1$.
Let $x_0\in X$ and~$\rho(x):=d(x,x_0)$, for any~$x\in X$, and
suppose that there exists a sequence of functions~$\rho_k\in{\mathcal{A}}$
such that 
\begin{equation}\label{DIS2}
\rho_k\to\rho \qquad\mu-\mbox{a.e. in }X\quad{\mbox{ and }}\quad
\|\Gamma(\rho_k)\|_{L^{\infty}}\leq C_0,\end{equation}
for some~$C_0>0$.

Let~$u\in \mathcal{A}$ be a stable solution
to~\eqref{eqncomp} with
\begin{equation}\label{mai us}
\int_{X}\Gamma(u)\, d\mu <\infty
.\end{equation}
Then:
\begin{align}
&K>0\Longrightarrow \Gamma(u)=0\qquad\mu-\mbox{a.e. in }X;\\
\label{KPOSI0}
&K=0 \Longrightarrow \Gamma_2(u)-\Gamma\left(\Gamma(u)^{\frac{1}{2}}\right)=0\qquad\mu-\mbox{a.e. in }X.
\end{align}
In addition, if~$K>0$ and $\Gamma(u)\in \mathcal{A}\cap C^0(X)$ then
\begin{align}\label{KPOSI}
{\mbox{$u$ is constant in~$X$}}.
\end{align}
\end{teo}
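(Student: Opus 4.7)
My plan follows the geometric Sternberg--Zumbrun philosophy, transplanted to the Markov triple framework: combine the stability inequality with a Bochner-type identity and invoke Theorem \ref{QUI0} as a reverse Cauchy--Schwarz bound on $\Gamma_2$. The preparatory step is the Bochner formula for the solution. From the equation $Lu=-F(u)$ together with the chain rule \eqref{9090}, one obtains $\Gamma(u,Lu)=-F'(u)\Gamma(u)$, so Definition \ref{defGamma2} yields
\[
\Gamma_2(u)\;=\;\tfrac{1}{2}\,L\Gamma(u)\;+\;F'(u)\,\Gamma(u).
\]

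The heart of the argument is to insert the test function $\varphi=\Gamma(u)^{1/2}\xi$ into the stability condition \eqref{hstab}, where $\xi\in\mathcal{A}_0$ is a cutoff. Because $\sqrt{\cdot}$ is not $C^{\infty}$ at the origin, I would first regularize as $\varphi_\epsilon=(\Gamma(u)+\epsilon)^{1/2}\xi$ and then let $\epsilon\to0^{+}$. Expanding $\Gamma(\varphi_\epsilon)$ by means of the product rule \eqref{9191}, integrating by parts the term $\xi^2 L\Gamma(u)$ (which arises from rewriting $\int F'(u)\Gamma(u)\xi^2\,d\mu$ through the Bochner identity) via property (6'), and simplifying via the chain rule applied to $\Gamma(u)=(\Gamma(u)^{1/2})^2$, the cross-terms cancel and one arrives at a geometric Poincar\'e-type inequality of the form
\[
\int_X \xi^2\Bigl[\,\Gamma_2(u)-\Gamma\!\bigl(\Gamma(u)^{1/2}\bigr)\,\Bigr]\,d\mu\;\leq\;\int_X \Gamma(u)\,\Gamma(\xi)\,d\mu.
\]
Wherever $\Gamma(u)>0$, the chain rule gives $\Gamma(\Gamma(u)^{1/2})=\Gamma(\Gamma(u))/(4\Gamma(u))$, so Theorem \ref{QUI0} rewrites exactly as $\Gamma_2(u)-\Gamma(\Gamma(u)^{1/2})\geq K\Gamma(u)$, identifying the left-hand integrand above as pointwise nonnegative and bounded below by $K\xi^2\Gamma(u)$.

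To conclude, I would choose $\xi=\xi_k$ from property (5), so $\Gamma(\xi_k)\leq 1/k$; the integrability hypothesis \eqref{mai us} then drives the right-hand side to $0$ as $k\to\infty$, while monotone convergence on the left (using $\xi_k^2\uparrow 1$ and the nonnegativity of the integrand via Theorem \ref{QUI0}) produces
\[
K\int_X \Gamma(u)\,d\mu\;+\;\int_X\Bigl[\,\Gamma_2(u)-K\Gamma(u)-\Gamma\!\bigl(\Gamma(u)^{1/2}\bigr)\,\Bigr]\,d\mu\;\leq\;0.
\]
Both summands being nonnegative, they must vanish separately. If $K>0$, the first forces $\Gamma(u)=0$ $\mu$-a.e.; if $K=0$, the second is exactly \eqref{KPOSI0}. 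When in addition $K>0$ and $\Gamma(u)\in\mathcal{A}\cap C^0(X)$, the $\mu$-a.e.\ vanishing combined with continuity and the full support of $\mu$ promote $\Gamma(u)\equiv0$ on $X$, whence property (7') applied to $u\in\mathcal{A}$ forces $u$ to be constant, yielding \eqref{KPOSI}. The main technical obstacle is the rigorous derivation of the Poincar\'e-type inequality with $\varphi=\Gamma(u)^{1/2}\xi$: the $\epsilon$-regularization together with the limit $\epsilon\to 0^{+}$ must be justified in the abstract setting, and this is precisely where the hypothesis $\Gamma:\mathcal{A}\times\mathcal{A}\to\mathcal{A}$ and the availability of the distance approximants $\rho_k$ from \eqref{DIS2} are expected to be crucial, by guaranteeing that the relevant compositions stay in $\mathcal{A}$ and that there is a sufficiently rich supply of admissible cutoffs for which both the chain rule and the integration by parts formula apply.
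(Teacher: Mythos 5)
Your proposal is correct, and it actually takes a genuinely simpler route to the conclusion than the paper does. Both you and the paper first derive the Poincar\'e-type inequality (the paper's Theorem \ref{eqnlin}), and your outline of that derivation -- regularize the test function as $(\Gamma(u)+\epsilon)^{1/2}\xi$, expand via the product rule, integrate by parts using $(6')$ and the Bochner identity $\Gamma_2(u)=\tfrac12 L\Gamma(u)+F'(u)\Gamma(u)$, cancel the $F'(u)\Gamma(u)$ terms, and pass $\epsilon\to 0^+$ -- matches the paper's proof in both structure and substance. You also both invoke Theorem \ref{QUI0} together with $\Gamma(\Gamma(u))=4\Gamma(u)\Gamma(\Gamma(u)^{1/2})$ to recognize the left-hand integrand as pointwise nonnegative, and both finish the $K>0$ constancy statement via $(7')$.

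The genuine divergence is in the final limiting argument. You take $\varphi=\xi_k$ directly in the Poincar\'e inequality and observe that $\Gamma(\xi_k)\le 1/k$ from property $(5)$ drives $\int_X\Gamma(u)\Gamma(\xi_k)\,d\mu\le k^{-1}\int_X\Gamma(u)\,d\mu\to 0$, with monotone convergence handling the left side. This is clean, correct, and does not use the distance approximants $\rho_k$ from \eqref{DIS2} at all -- so, contrary to your own closing remark, the $\rho_k$ play no role in your argument (they are also not needed in the proof of Theorem \ref{eqnlin}, only the assumption $\Gamma:\mathcal{A}\times\mathcal{A}\to\mathcal{A}$ is). The paper, by contrast, builds the more elaborate test function $\varphi_k=\widetilde{\Phi}(\rho_k\xi_k)+\xi_k$, first replacing $\xi_k$ by $\xi_k^4$ to get $\Gamma(\xi_k)\le 16\xi_k^4/k$, then expanding $\Gamma(\varphi_k)$, applying the Cauchy--Schwarz inequality for $\Gamma$, sending $k\to\infty$ to localize the right-hand side to a bound of the form $9C_0\int_{X_R}\Gamma(u)$ over the annulus $X_R=\{\rho\in[R,R+1]\}$, and finally sending $R\to\infty$ using integrability of $\Gamma(u)$. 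That construction is what makes the hypothesis on $\rho_k$ in \eqref{DIS2} operative in the paper. Your argument shows this hypothesis is not actually needed given the strong form of property $(5)$ (with $\Gamma(\xi_k)\le 1/k$). The paper's shell-cutoff construction would, however, be the more robust route if one only knew that $\Gamma(\xi_k)$ were uniformly bounded rather than $O(1/k)$: your argument leans entirely on that quantitative decay, while the paper's trades it for geometric control via the intrinsic distance. Both are valid proofs under the stated assumptions; yours is shorter, the paper's is more self-reliant on the distance structure.
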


The distance function in~\eqref{DIS}, often called intrinsic distance,
has been considered also in~\cite{BLG} and it coincides
with the Riemannian distance if~$X$ is a Riemannian manifold
and with the Carnot-Carath\'eodory distance if~$X$ is a
Carnot-Carath\'eodory space
(see~\cite{BLU} for the definition).
In this setting,
Assumption~\eqref{DIS2} is related
to the fact that~$\rho$ is Lipschitz
as defined in~\cite[Definition 3.3.24]{BLG}. 
We recall that the same assumption appears in \cite{Sturm} and it 
is the analogous to $|\nabla\rho|\leq 1$ which is satisfied by geodesic
distances on any manifold. 

The proof of Theorem~\ref{main1} is
based on a geometric Poincar\'e-type inequality, which we state
in this setting as follows:

\begin{teo}\label{eqnlin}
Let~$u\in\mathcal{A}$ be stable weak solution to~\eqref{eqncomp}. Then,
\begin{equation}\label{GF}
\int_{X}\left(\Gamma_2(u)-\Gamma\left(\Gamma(u)^{\frac{1}{2}}\right)\right)\varphi^2\, d\mu\leq \int_{X} \Gamma(u)\Gamma(\varphi)
\end{equation}
for any~$\varphi\in \mathcal{A}_0$.
\end{teo}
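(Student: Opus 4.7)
The plan is to combine the stability inequality \eqref{hstab}, tested against a function of the form $\varphi\,\Gamma(u)^{1/2}$, with an identity for $F'(u)\Gamma(u)$ derived from the equation via the carr\'e du champ it\'er\'e. A regularization $\Gamma(u)\leadsto \Gamma(u)+\eps$ will be needed since $x\mapsto x^{1/2}$ is not smooth at the origin and the chain rule \eqref{9090} requires smoothness.

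The first ingredient is the pointwise identity
\begin{equation*}
\tfrac12 L\Gamma(u)\;=\;\Gamma_2(u)+\Gamma(u,Lu)\;=\;\Gamma_2(u)-F'(u)\,\Gamma(u),
\end{equation*}
obtained from $Lu=-F(u)$ together with the definition \eqref{GammaD2} and the chain rule \eqref{9090} applied with $\Psi=F$. Multiplying by $\varphi^2$ with $\varphi\in\mathcal{A}_0$, integrating, and using the integration by parts formula (6') together with the consequence $\Gamma(\Gamma(u),\varphi^2)=2\varphi\,\Gamma(\Gamma(u),\varphi)$ of \eqref{9191}, I obtain
\begin{equation*}
\int_X F'(u)\,\Gamma(u)\,\varphi^2\,d\mu\;=\;\int_X\Gamma_2(u)\,\varphi^2\,d\mu+\int_X\varphi\,\Gamma(\Gamma(u),\varphi)\,d\mu.
\end{equation*}

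The second ingredient is stability applied to the regularized test function $\psi_\eps:=\varphi\,(\Gamma(u)+\eps)^{1/2}$, which for $\eps>0$ lies in $\mathcal{A}_0$ by decomposing it as $\varphi\bigl[(\Gamma(u)+\eps)^{1/2}-\eps^{1/2}\bigr]+\eps^{1/2}\varphi$ and using properties (3') and (1'). Expanding $\Gamma(\psi_\eps)$ via the product rule \eqref{9191} and evaluating $\Gamma((\Gamma(u)+\eps)^{1/2},\varphi)=\Gamma(\Gamma(u),\varphi)/\bigl(2(\Gamma(u)+\eps)^{1/2}\bigr)$ via the chain rule \eqref{9090}, one finds
\begin{equation*}
\Gamma(\psi_\eps)\;=\;(\Gamma(u)+\eps)\,\Gamma(\varphi)+\varphi\,\Gamma(\Gamma(u),\varphi)+\varphi^2\,\Gamma\bigl((\Gamma(u)+\eps)^{1/2}\bigr).
\end{equation*}
Inserting $\psi_\eps^2=\varphi^2(\Gamma(u)+\eps)$ and the above expression into \eqref{hstab}, invoking the first step to replace $\int_X F'(u)\Gamma(u)\varphi^2\,d\mu$, and cancelling the common term $\int_X\varphi\,\Gamma(\Gamma(u),\varphi)\,d\mu$ yields
\begin{equation*}
\int_X\Gamma_2(u)\,\varphi^2\,d\mu+\eps\!\int_X F'(u)\,\varphi^2\,d\mu\;\leq\;\int_X(\Gamma(u)+\eps)\,\Gamma(\varphi)\,d\mu+\int_X\varphi^2\,\Gamma\bigl((\Gamma(u)+\eps)^{1/2}\bigr)\,d\mu.
\end{equation*}

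The main obstacle is the final passage $\eps\to 0^+$. The terms linear in $\eps$ are harmless (they vanish by dominated convergence, since $\Gamma(\varphi)$ and $\varphi^2$ are integrable and $F'(u)$ is bounded on the range of $u$), but the factor $\Gamma\bigl((\Gamma(u)+\eps)^{1/2}\bigr)$ has to be controlled on the locus $\{\Gamma(u)=0\}$, where the formal expression $\Gamma(\Gamma(u)^{1/2})$ is singular. Using the pointwise identity $\Gamma\bigl((\Gamma(u)+\eps)^{1/2}\bigr)=\Gamma(\Gamma(u))/\bigl(4(\Gamma(u)+\eps)\bigr)$ this limit is monotone nondecreasing in $1/\eps$ and its supremum is, by definition, $\Gamma(\Gamma(u)^{1/2})$; making this identification compatible with the algebra $\mathcal{A}$ and justifying the limit of integrals is the principal technical point, after which \eqref{GF} is immediate.
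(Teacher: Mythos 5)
Your proposal reproduces the paper's argument essentially verbatim: the same regularized test function $\varphi\sqrt{\Gamma(u)+\eps}$, the same product/chain-rule expansion of $\Gamma$ thereof, the same use of integration by parts and $\Gamma(u,-F(u))=-F'(u)\Gamma(u)$ to eliminate the $F'(u)$ terms, and the same cancellation; you only reorganize by computing the identity for $\int F'(u)\Gamma(u)\varphi^2\,d\mu$ before applying stability rather than after. The technical point you flag at the end is resolved in the paper by the factorization $\Gamma\bigl(\sqrt{\Gamma(u)+\eps}\bigr)=\frac{\Gamma(u)}{\Gamma(u)+\eps}\,\Gamma\bigl(\sqrt{\Gamma(u)}\bigr)$ (coming from $\Gamma(\Gamma(u))=4\Gamma(u)\,\Gamma(\sqrt{\Gamma(u)})$), which gives the pointwise domination $\varphi^2\Gamma\bigl(\sqrt{\Gamma(u)+\eps}\bigr)\le\varphi^2\Gamma\bigl(\sqrt{\Gamma(u)}\bigr)$ uniformly in $\eps$ (vanishing on $\{\Gamma(u)=0\}$), so the limit passes by dominated convergence after reducing to the case where the dominating integral is finite, avoiding the need to justify monotone convergence on the degenerate set.
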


\section{Proof of Theorems~\ref{main1} and~\ref{eqnlin}}\label{0pqwdlfjvsw3hb}

\begin{proof}[Proof of Theorem~\ref{eqnlin}:]
We fix $\varepsilon>0$ and take~$\varphi\in \mathcal{A}_0$. By $(4')$, 
we know that~$\Gamma(u)\in\mathcal{A}$. Since~$\mathcal{A}$ is an algebra
(and therefore a vector space) it follows that
\begin{equation}\label{02}
\Gamma(u)+\varepsilon\in\mathcal{A}.\end{equation}
Now, we consider a function~$\Psi\in C^\infty(\R)$ such that~$\Psi(r)=0$ for
any~$r\le\varepsilon/4$ and~$\Psi(r)=\sqrt{r}$ for any~$r\ge\varepsilon/2$.
In view of~$(3')$ and~\eqref{02}, we have that
\begin{equation*}
\sqrt{\Gamma(u)+\varepsilon}=\Psi\big(\Gamma(u)+\varepsilon\big)
\in\mathcal{A}.\end{equation*}
Consequently,
by~$(1')$ we conclude that $$\psi_{\varepsilon}:=\left(\sqrt{\Gamma(u)+\varepsilon}\right)\varphi\in \mathcal{A}_0.$$
Hence, applying~\eqref{hstab} with~$\varphi$
replaced by~$\psi_{\varepsilon}$, we get
\begin{equation}\label{9393}
0\leq \int_{X} \Gamma\left(\left(\sqrt{\Gamma(u)+\varepsilon}\right)\varphi\right)\, d\mu-\int_{X}F'(u)\left(\Gamma(u)+\varepsilon\right)\varphi^2\, d\mu.
\end{equation}
Furthermore, by~\eqref{9191}, we have that
\begin{equation*}
\begin{split}&
\Gamma\left(\left(\sqrt{\Gamma(u)+\varepsilon}\right)\varphi\right)\\
=\;&
\Gamma\left(\left(\sqrt{\Gamma(u)+\varepsilon}\right)\varphi,\,
\left(\sqrt{\Gamma(u)+\varepsilon}\right)\varphi\right)\\
=\;&
\sqrt{\Gamma(u)+\varepsilon}\,
\Gamma\left(\varphi,\,
\left(\sqrt{\Gamma(u)+\varepsilon}\right)\varphi\right)
+\varphi
\Gamma\left( \sqrt{\Gamma(u)+\varepsilon},\,
\left(\sqrt{\Gamma(u)+\varepsilon}\right)\varphi\right)
\\ =\;&
\sqrt{\Gamma(u)+\varepsilon}\,
\Gamma\left(
\left(\sqrt{\Gamma(u)+\varepsilon}\right)\varphi, \varphi\right)
+\varphi
\Gamma\left( 
\left(\sqrt{\Gamma(u)+\varepsilon}\right)\varphi,\,
\sqrt{\Gamma(u)+\varepsilon}\right)
\\ =\;&
\big(\Gamma(u)+\varepsilon\big)\,
\Gamma\left(\varphi, \varphi\right)
+
\left(\sqrt{\Gamma(u)+\varepsilon}\right)\varphi\,
\Gamma\left(
\sqrt{\Gamma(u)+\varepsilon}, \varphi\right)
\\&\quad+\left(\sqrt{\Gamma(u)+\varepsilon}\right)\varphi\,
\Gamma\left( 
\varphi,\,
\sqrt{\Gamma(u)+\varepsilon}\right)
+\varphi^2
\Gamma\left( \sqrt{\Gamma(u)+\varepsilon},\,
\sqrt{\Gamma(u)+\varepsilon}\right)\\
=\;&
\big(\Gamma(u)+\varepsilon\big)\,
\Gamma(\varphi)
+
2\left(\sqrt{\Gamma(u)+\varepsilon}\right)\varphi\,
\Gamma\left(
\sqrt{\Gamma(u)+\varepsilon}, \varphi\right)
+\varphi^2
\Gamma\left( \sqrt{\Gamma(u)+\varepsilon}\right).
\end{split}\end{equation*}
This and~\eqref{9292} imply that
$$
\Gamma\left(\left(\sqrt{\Gamma(u)+\varepsilon}\right)\varphi\right)=
\big(\Gamma(u)+\varepsilon\big)\,
\Gamma(\varphi)
+\frac12\,
\Gamma\left(
\Gamma(u)+\varepsilon,\, \varphi^2\right)
+\varphi^2
\Gamma\left( \sqrt{\Gamma(u)+\varepsilon}\right).
$$
Plugging this information into~\eqref{9393}, we obtain that
\begin{equation}\label{9494}\begin{split}&
\int_{X}F'(u)(\Gamma(u)+\varepsilon)\varphi^2\, d\mu\\
\leq\;& \int_{X} \left(\Gamma(u)+\varepsilon\right)\Gamma(\varphi)+ \frac{1}{2}\,\Gamma(\varphi^2, \Gamma(u)+\varepsilon)+\varphi^2 \Gamma\left(\sqrt{\Gamma(u)+\varepsilon}\right)\, d\mu
.\end{split}\end{equation}
Now, we remark that, by Fatou's Lemma,
\begin{equation}\label{9595}
\liminf_{\varepsilon\to0}
\int_{X}F'(u)(\Gamma(u)+\varepsilon)\varphi^2\, d\mu\ge
\int_{X}F'(u) \Gamma(u)\varphi^2\, d\mu.\end{equation}
Moreover, from~$(1)$ we know that~$
\Gamma(\varphi)$ is a bounded function and therefore
\begin{equation}\label{9696}
\lim_{\varepsilon\to0}
\int_{X} \left(\Gamma(u)+\varepsilon\right)\Gamma(\varphi)\, d\mu
=
\int_{X} \Gamma(u)\Gamma(\varphi)\, d\mu
+
\lim_{\varepsilon\to0}
\varepsilon
\int_{X} \Gamma(\varphi)\, d\mu
=\int_{X} \Gamma(u)\Gamma(\varphi)\, d\mu.
\end{equation}
We also remark that, for any~$f,g\in{\mathcal{A}}$,
\begin{equation}\label{100}\begin{split}
\Gamma(f,g+\varepsilon)\;&=\;
\frac{1}{2}\left[L(f(g+\varepsilon))-fL(g+\varepsilon)-(g+\varepsilon)
Lf\right]\\
&=\;
\frac{1}{2}\left[L(fg)+\varepsilon Lf-
fLg-\varepsilon fL(1)-gLf-\varepsilon Lf\right]
\\ &=\;
\frac{1}{2}\left[L(fg)-
fLg-gLf\right]\\
&=\;\Gamma(f,g),
\end{split}\end{equation}
thanks to~\eqref{f per g}
and~\eqref{and sp}. As a consequence, we have that
\begin{equation}\label{9797}
\int_{X}\Gamma(\varphi^2, \Gamma(u)+\varepsilon)\, d\mu
=
\int_{X}\Gamma(\varphi^2, \Gamma(u))\, d\mu.\end{equation}
Furthermore, we claim that
\begin{equation}\label{9898}
\limsup_{\varepsilon\to0}
\int_{X}
\varphi^2 \Gamma\left(\sqrt{\Gamma(u)+\varepsilon}\right)\, d\mu
\le
\int_{X}
\varphi^2 \Gamma\left(\sqrt{\Gamma(u)}\right)\, d\mu
.\end{equation}
To prove this, we can assume that
\begin{equation}\label{9999}
\int_{X}
\varphi^2 \Gamma\left(\sqrt{\Gamma(u)}\right)\, d\mu
<+\infty,\end{equation}
otherwise~\eqref{9898} is true by default. Also, in view of~\eqref{9292}
(used here with~$f=g=\sqrt{\Gamma(u)+\varepsilon}$), we see that
$$ 
\Gamma(\Gamma(u)+\varepsilon)=4(\Gamma(u)+\varepsilon)
\,\Gamma\left(\sqrt{\Gamma(u)+\varepsilon}\right).$$
{F}rom this and~\eqref{100}, we obtain that
\begin{equation}\label{101}
\Gamma\left(\sqrt{\Gamma(u)+\varepsilon}\right)=\frac{
\Gamma(\Gamma(u)+\varepsilon)}{4(\Gamma(u)+\varepsilon)}
=
\frac{
\Gamma(\Gamma(u))}{4(\Gamma(u)+\varepsilon)}
.\end{equation}
Similarly, using~\eqref{9292}
with~$f=g=\sqrt{\Gamma(u)}$, we see that
\begin{equation}\label{sei pr}
\Gamma(\Gamma(u))=4(\Gamma(u))
\,\Gamma\left(\sqrt{\Gamma(u)}\right).\end{equation}
Inserting this into~\eqref{101}, we conclude that
$$ \varphi^2\Gamma\left(\sqrt{\Gamma(u)+\varepsilon}\right)=
\varphi^2\,\frac{\Gamma(u)}{\Gamma(u)+\varepsilon}
\,\Gamma\left(\sqrt{\Gamma(u)}\right)\le
\varphi^2\,\Gamma\left(\sqrt{\Gamma(u)}\right),$$
and the latter is a summable function, thanks to~\eqref{9999}.
Therefore, 
\begin{eqnarray*}
\int_{X}
\varphi^2 \Gamma\left(\sqrt{\Gamma(u)+\varepsilon}\right)\, d\mu
&=&
\int_{X}
\varphi^2\,\frac{\Gamma(u)}{\Gamma(u)+\varepsilon}
\,\Gamma\left(\sqrt{\Gamma(u)}\right)
\, d\mu
\\ &=&
\int_{X}
\varphi^2\,\left(1-\frac{\varepsilon}{\Gamma(u)+\varepsilon}
\right)\,\Gamma\left(\sqrt{\Gamma(u)}\right)
\, d\mu
\\
&\le&
\int_{X}
\varphi^2\,\Gamma\left(\sqrt{\Gamma(u)}\right)
\, d\mu,
\end{eqnarray*}
which in turn implies~\eqref{9898}.

Therefore, letting $\varepsilon \to 0$ in~\eqref{9494},
and exploiting~\eqref{9595}, \eqref{9696}, \eqref{9797}
and~\eqref{9898},
we conclude that
\begin{equation*}
\int_{X}F'(u)\Gamma(u)\varphi^2\, d\mu \leq \int_{X}\Gamma(u)\Gamma(\varphi)+ \frac{1}{2}\Gamma(\varphi^2, \Gamma(u))+\varphi^2 \Gamma\left(\sqrt{\Gamma(u)}\right)\, d\mu.
\end{equation*}
As a consequence, by $(6')$, Definition \ref{defGamma2}
(used here with~$f=g=u$) and~\eqref{eqncomp}, we obtain that
\begin{equation}\label{102}
\begin{split}
\int_{X}F'(u)\Gamma(u)\varphi^2\, d\mu &\leq \int_{X} \Gamma(u)\Gamma(\varphi)- \frac{1}{2}\,\varphi^2 \,L\Gamma(u)+ \varphi^2\,\Gamma\left(\Gamma(u)^{\frac{1}{2}}\right)\, d\mu\\
&=\int_{X} \Gamma(u)\Gamma(\varphi)-\varphi^2\, 
\Big(\Gamma_2(u)+\Gamma(u, Lu)\Big) +\varphi^2 \Gamma\left(\Gamma(u)^{\frac{1}{2}}\right)\, d\mu\\
&=\int_{X} \Gamma(u)\Gamma(\varphi)- \varphi^2 \,\Big(\Gamma_2(u)+\Gamma(u, -F(u))\Big)+\varphi^2 \Gamma\left(\Gamma(u)^{\frac{1}{2}}\right)\, d\mu.
\end{split}\end{equation}
Besides, using~\eqref{9090} with~$\Psi=-F$, we see that
$$  \Gamma(u,-F(u))
\Gamma(-F(u),u)=
-F'(u)\,\Gamma(u)\qquad \mu-\mbox{a.e.}
$$
and thus~\eqref{102} becomes
$$\int_{X}F'(u)\Gamma(u)\varphi^2\, d\mu \le
\int_{X} \Gamma(u)\Gamma(\varphi)- \Gamma_2(u)\varphi^2+F'(u)\Gamma(u)\varphi^2 +\varphi^2 \Gamma\left(\Gamma(u)^{\frac{1}{2}}\right)\, d\mu.
$$
Then, canceling one term, we obtain~\eqref{GF}, as desired.
\end{proof}
 
With this, we are able to prove Theorem~\ref{main1}:

\begin{proof}[Proof of Theorem~\ref{main1}:]

Using 
the identity in~\eqref{sei pr} and
Theorem~\ref{QUI0} \label{QUI}
we have that
\begin{equation}\label{11-039}
\begin{split}&
4\Gamma(u)\,\Big(
\Gamma_2(u)-\Gamma\big(\Gamma(u)^{\frac{1}{2}}\big)\Big)
=
4\Gamma(u)\,
\Gamma_2(u)-
4\Gamma(u)\,
\Gamma\big(\Gamma(u)^{\frac{1}{2}}\big)
\\ &\qquad= 4\Gamma(u)\,
\Gamma_2(u)-\Gamma(\Gamma(u))
\ge 4K\,(\Gamma(u))^2,
\end{split}\end{equation}
which is always nonnegative if~$K\ge0$.
Also, we know that~$\Gamma(0)=\Gamma_2(0)=0$, due to~\eqref{f per g}
and~\eqref{GammaD2}, and therefore
$$ \Gamma_2(u)-\Gamma\big(\Gamma(u)^{\frac{1}{2}}\big)=0
\qquad \mbox{in}\ \{\Gamma(u)=0\}.$$
Using this and Theorem \ref{eqnlin} 
we can write
\begin{equation}\begin{split}\label{stimaGamma2}
\int_{\{ \Gamma(u)\neq 0\}}\left(\Gamma_2(u)-\Gamma\left(\Gamma(u)^{\frac{1}{2}}\right)\right)\varphi^2\, d\mu
=\;&
\int_{X}\left(\Gamma_2(u)-\Gamma\left(\Gamma(u)^{\frac{1}{2}}\right)\right)\varphi^2\, d\mu
\\ \leq \;&\int_{X} \Gamma(u)\Gamma(\varphi)\, d\mu
\end{split}\end{equation}
for every $\varphi\in\mathcal{A}_0$.

Now, we fix~$R>1$ and define~$\Phi=\Phi_R\in C^{\infty}(\bbR)$,
with
\begin{equation}\label{test2}
|\Phi'(t)|\leq 3 
\end{equation}
for any $|t|\in [R, R+1]$ and
\begin{equation}\label{test}
\Phi(t):=\left\{\begin{array}{lll}
1 & \mbox{if}& |t|\leq R,\\
0 & \mbox{if}& |t|\geq R+1.
\end{array}
\right.
\end{equation}
We also define
\begin{equation}\label{WI}\widetilde{\Phi}(t):=\Phi(t)-1,\end{equation} for any~$t\in\R$.
In this way, we have that~$\widetilde{\Phi}(0)=0$. 

Moreover, we consider~$\xi_k$ as given by~(5). 
We remark that, in light of~\eqref{9292} and~\eqref{xi2},
\begin{equation*}
\Gamma(\xi_k^2)=4\xi_k^2\Gamma(\xi_k)\le\frac{4\xi_k^2}{k}.\end{equation*}
Iterating this, we have that
\begin{equation}\label{xi2-tris}
\Gamma(\xi_k^4)=4\xi_k^4\Gamma(\xi_k^2)\le\frac{16\xi_k^4}{k}.\end{equation}
Therefore, by possibly renaming~$\xi_k$ into~$\xi_k^4$,
we can suppose in view of~(5) that~$\xi_k(x)\in[0,1]$ for any~$x\in X$, and
\begin{equation}\label{inse}
\lim_{k\to+\infty} \xi_k=\mathrm{1}\qquad \mu-\mbox{a.e. in $X$}
\end{equation}
and
\begin{equation}\label{xi2-bis}
\Gamma(\xi_k)\le\frac{16\xi_k^4}{k}.\end{equation}
Then,
using (1') and the setting in~\eqref{DIS2}, we obtain that for every $k\in\mathbb{N}$
the function $\rho_k\xi_k$ belongs to $\mathcal{A}_0$.
Since $\mathcal{A}_0$ is a vector space we conclude
that
\begin{equation}\label{WI2}
 \varphi_k:=\widetilde{\Phi}(\rho_k\xi_k)+\xi_k\in \mathcal{A}_0.\end{equation}
Also, exploiting the bilinearity of $\Gamma$ we get
\begin{equation}\label{zero99}
\Gamma(\varphi_k)=\Gamma\big(\widetilde{\Phi}(\rho_k\xi_k)+\xi_k\big)
=\Gamma\big(\widetilde{\Phi}(\rho_k\xi_k)\big)
+2\Gamma\big(\widetilde{\Phi}(\rho_k\xi_k),\xi_k\big)+\Gamma(\xi_k).
\end{equation}
Moreover, using formula~\eqref{9090} with~$\Psi:=\widetilde{\Phi}$,
we obtain that
\begin{equation}\begin{split}\label{1}
\Gamma(\widetilde{\Phi}(\rho_k\xi_k))&=(\widetilde{\Phi}'(\rho_k\xi_k))^2\Gamma(\rho_k\xi_k)\\
&=(\Phi'(\rho_k\xi_k))^2\Big(\xi_k^2\Gamma(\rho_k)+
2\rho_k\xi_k\Gamma(\rho_k,\xi_k)+\rho_k^2\Gamma(\xi_k)\Big)
\end{split}\end{equation}
and
\begin{equation}\begin{split}\label{2}
\Gamma\big(\widetilde{\Phi}(\rho_k\xi_k),\xi_k\big)&=\Phi'(\rho_k\xi_k)\Gamma(\rho_k\xi_k, \xi_k)\\
&=\Phi'(\rho_k\xi_k)\Big(\xi_k\Gamma(\rho_k,\xi_k)+\rho_k\,\Gamma(\xi_k)\Big).
\end{split}\end{equation}
Plugging~\eqref{1} and~\eqref{2} into~\eqref{zero99}, we have that
\begin{equation}\begin{split}\label{tre}
\Gamma(\varphi_k)
=\;&(\Phi'(\rho_k\xi_k))^2\Big(\xi_k^2\Gamma(\rho_k)+
2\rho_k\xi_k\Gamma(\rho_k,\xi_k)+\rho_k^2\Gamma(\xi_k)\Big)\\&\qquad
+ 2\Phi'(\rho_k\xi_k)\Big(\xi_k\Gamma(\rho_k,\xi_k)+\rho_k\Gamma(\xi_k)\Big)
+\Gamma(\xi_k).
\end{split}\end{equation}
Now, taking~$\varphi:=\varphi_k$ into~\eqref{stimaGamma2}
and making use of~\eqref{tre}, we get that
\begin{equation}\begin{split}\label{ineqk:PRE}
& \int_{\{ \Gamma(u)\neq 0\}}\left(\Gamma_2(u)
-\Gamma\left(\Gamma(u)^{\frac{1}{2}}\right)\right)\varphi_k^2\, d\mu\\
\leq\;& \int_{X} \Gamma(u)\Big[(\Phi'(\rho_k\xi_k))^2\Big(
\xi_k^2\Gamma(\rho_k)+2\rho_k\xi_k\Gamma(\rho_k,\xi_k)+
\rho_k^2\Gamma(\xi_k)\Big)\\  &\qquad\qquad+ \Phi'(\rho_k\xi_k)
\Big(\xi_k\Gamma(\rho_k,\xi_k)+\rho_k\,\Gamma(\xi_k)\Big)+
\Gamma(\xi_k)\Big]\, d\mu.
\end{split}\end{equation}
We also point out that, in view of~\eqref{test2}
and~\eqref{test},
and recalling~\eqref{xi2-bis} and~\eqref{DIS2},
\begin{equation}
\begin{split}\label{0q3481-1-P}
& \int_{X} \Gamma(u)\Big[(\Phi'(\rho_k\xi_k))^2\Big(
\xi_k^2\Gamma(\rho_k)+2\rho_k\xi_k\Gamma(\rho_k,\xi_k)+
\rho_k^2\Gamma(\xi_k)\Big)\\  &\qquad\qquad+ \Phi'(\rho_k\xi_k)
\Big(\xi_k\Gamma(\rho_k,\xi_k)+\rho_k\,\Gamma(\xi_k)\Big)+
\Gamma(\xi_k)\Big]\, d\mu\\
\le\;&
\int_{X_{R,k}} \Gamma(u)\left[9\left(
C_0+2\rho_k\xi_k\,|\Gamma(\rho_k,\xi_k)|+
\frac{16\rho_k^2\xi_k^2}{k}\right)+ 3
\left(|\Gamma(\rho_k,\xi_k)|+\frac{16\rho_k\xi_k}{k}\right)\right]\,d\mu\\
&\qquad+\frac1k\int_X \Gamma(u)\,d\mu,
\end{split}\end{equation}
where
\begin{equation}\label{0138400}
X_{R,k} := \{ x\in X{\mbox{ s.t. }} \rho_k(x)\,\xi_k(x)\in[R,R+1]\}.\end{equation}
{F}rom~\eqref{0q3481-1-P} and~\eqref{0138400}, we have
\begin{equation}
\begin{split}\label{0q3481-1}
& \int_{X} \Gamma(u)\Big[(\Phi'(\rho_k\xi_k))^2\Big(
\xi_k^2\Gamma(\rho_k)+2\rho_k\xi_k\Gamma(\rho_k,\xi_k)+
\rho_k^2\Gamma(\xi_k)\Big)\\  &\qquad\qquad+ \Phi'(\rho_k\xi_k)
\Big(\xi_k\Gamma(\rho_k,\xi_k)+\rho_k\Gamma(\xi_k)\Big)+
\Gamma(\xi_k)\Big]\, d\mu\\
\le\;&
\int_{X_{R,k}} \Gamma(u)\left[9\left(
C_0+2(R+1)\,|\Gamma(\rho_k,\xi_k)|+
\frac{16(R+1)^2}{k}\right)+ 3
\left(|\Gamma(\rho_k,\xi_k)|+\frac{16(R+1)}{k}\right)\right]\,d\mu\\
&\qquad+\frac1k\int_X \Gamma(u)\,d\mu.
\end{split}\end{equation}
Now, we observe that
the following
Cauchy-Schwarz inequality for~$\Gamma$ holds true: given~$f$, $g\in{\mathcal{A}}$
and~$\alpha>0$, the fact that~$\Gamma(\cdot,\cdot)$
is a symmetric bilinear map implies that
$$ 0\le \Gamma\left(\alpha f\pm\frac1\alpha g\right)=
\Gamma\left(\alpha f\right)+\Gamma\left(\frac1\alpha g\right)
\pm2\Gamma\left(\alpha f,\frac1\alpha g\right)
=\alpha^2
\Gamma(f)+\frac1{\alpha^2}\Gamma(g)
\pm2\Gamma\left(f, g\right)
$$
and therefore
$$ 2\left|\Gamma\left( f, g\right)\right|\le \alpha^2
\Gamma(f)+\frac1{\alpha^2}\Gamma(g).$$
Then, by choosing $\alpha$ appropriately, we obtain that
$$ \left|\Gamma\left( f, g\right)\right|\le \big(\Gamma(f)\big)^{\frac12}\big(
\Gamma(g)\big)^{\frac12}.$$
As a consequence, we infer from~\eqref{0q3481-1} that
\begin{eqnarray*}
&& \int_{X} \Gamma(u)\Big[(\Phi'(\rho_k\xi_k))^2\Big(
\xi_k^2\Gamma(\rho_k)+2\rho_k\xi_k\Gamma(\rho_k,\xi_k)+
\rho_k^2\Gamma(\xi_k)\Big)\\  &&\qquad\qquad+ \Phi'(\rho_k\xi_k)
\Big(\xi_k\Gamma(\rho_k,\xi_k)+\rho_k\Gamma(\xi_k)\Big)+
\Gamma(\xi_k)\Big]\, d\mu\\
&\le&
\int_{X_{R,k}} \Gamma(u)\left[9\left(
C_0+\frac{32(R+1)\sqrt{C_0}}{\sqrt{k}}+
\frac{16(R+1)^2}{k}\right)+ 3
\left(\sqrt{ \frac{C_0}{{k}} }+\frac{16(R+1)}{k}\right)\right]\,d\mu\\
&&\qquad+\frac1k\int_X \Gamma(u)\,d\mu.
\end{eqnarray*}
We insert this information into~\eqref{ineqk:PRE}
and we take the limit as~$k\to+\infty$: 
in this way, we obtain that
\begin{equation}\label{S-12:01}
\begin{split}
&
\lim_{k\to+\infty} \int_{\{ \Gamma(u)\neq 0\}}\left(\Gamma_2(u)
-\Gamma\left(\Gamma(u)^{\frac{1}{2}}\right)\right)\varphi_k^2\, d\mu
\\ \le\;&\lim_{k\to+\infty}
\int_{X_{R,k}} \Gamma(u)\left[9\left(
C_0+\frac{32(R+1)\sqrt{C_0}}{\sqrt{k}}+
\frac{16(R+1)^2}{k}\right)+ 3
\left(\sqrt{\frac{C_0}{{k}}}+\frac{16(R+1)}{k}\right)\right]\,d\mu\\
&\qquad+\frac1k\int_X \Gamma(u)\,d\mu
\\ =\;&
9C_0\int_{X_{R}} \Gamma(u),
\end{split}
\end{equation}
where we have used~\eqref{inse}, \eqref{mai us} and~\eqref{0138400}
in the last step, and
$$ X_{R} := \{ x\in X{\mbox{ s.t. }} \rho(x)\in[R,R+1]\}.$$
We also notice that, by~\eqref{WI}, \eqref{inse}
and~\eqref{WI2},
$$ \lim_{k\to+\infty}
\varphi_k=\widetilde{\Phi}(\rho)+1=\Phi(\rho),$$
and therefore we deduce from~\eqref{S-12:01} that
\begin{equation}\label{S-12:02}
\int_{\{ \Gamma(u)\neq 0\}}\left(\Gamma_2(u)
-\Gamma\left(\Gamma(u)^{\frac{1}{2}}\right)\right)\big(\Phi(\rho)\big)^2\, d\mu
\le
9C_0\int_{X_{R}} \Gamma(u).\end{equation}
Noticing that~$\Phi(\rho(x))=1$ for any~$x\in X$ for which~$\rho(x)\le R$, thanks to~\eqref{test},
and recalling~\eqref{mai us}, we can take the limit as~$R\to+\infty$
in~\eqref{S-12:02}, obtaining that
\begin{equation}\label{S-12:03}
\int_{\{ \Gamma(u)\neq 0\}}\left(\Gamma_2(u)
-\Gamma\left(\Gamma(u)^{\frac{1}{2}}\right)\right)\, d\mu
\le0.\end{equation}
Since the integrand in the left hand side of~\eqref{S-12:03} is nonnegative
(recall~\eqref{11-039}), this
gives~\eqref{KPOSI0}.

Now we suppose that~$K>0$. 
Then, by Definition~\ref{DEF2.3}, we have that $(X,\mu,\Gamma)$ 
also satisfies the $CD(K,\infty)$ condition with~$K=0$.
Then, from \eqref{KPOSI0}
and \eqref{11-039}, we infer that~$
\Gamma(u)=0$ $\mu-$a.e. in~$ X$. If in addition~$\Gamma(u)\in C^0(X)$,
we get $\Gamma(u)=0$ in~$ X$,
which in light of (7') gives~\eqref{KPOSI}.
\end{proof}

\section{Applications to vector fields satisfying the H\"ormander condition}\label{oqidwuwiefyetytyyt}

Here we take $\mathcal{A}_0:=\mathrm{Lip}_c(\mathbb{R}^n)$
and $\mathcal{A}:=\mathrm{Lip}(\mathbb{R}^n)$. 
Moreover, we let~$\eta\in\mathcal{A}$ and define
\[
d\mu:=e^{\eta} dx.
\]
Let $Z_1,\ldots, Z_m$ be smooth vector fields in $\mathbb{R}^n$
with $$Z_j=\sum_{i=1}^n Z_i^j\partial_i.$$
We define
\begin{equation}\label{ewiegrbgv567u6576u}
Z_0 f:= \sum_{j=1}^m Z_j\eta\,Z_jf,
\qquad 
\mathrm{div}{Z_j}:=\sum_{i=1}^n \partial_i Z_i^j\quad{\mbox{ and }}\quad
\Delta_{Z}:=\sum_{j=1}^m Z_jZ_j.\end{equation}
We assume that the family $(Z_1,\ldots, Z_m)$ satisfies
the H\"ormander condition: at any point~$x\in\mathbb{R}^n$,
consider the vector spaces $V_p$ generated by the vector fields~$Z_j$
at~$x$, namely
\begin{align*}
&V_1:=\mathrm{span}\{Z_j\ |\ 1\leq j\leq m\},\\
&V_2:=\mathrm{span}\{Z_j, [Z_j,Z_k]\ |\ 0\leq j,k\leq m\},\\
&\cdots\\
&V_d:=\mathrm{span}\{V_{d-1}\cup \{[Z_j,V]\ |\ V\in V_{d-1},\ 
0\leq j\leq m\},
\end{align*}
then there exists $d\in\mathbb{N}$ such that,
for any~$x\in\mathbb{R}^n$, $V_d=\mathbb{R}^n$.

We recall the following result from~\cite{GN}:

\begin{teo}\label{Lip}
Let $Z=(Z_1,\ldots, Z_m)$ be a family of smooth vector fields
in~$\mathbb{R}^n$ satisfying the H\"ormander condition and
let~$d$ be the associated Carnot-Carath\'eodory distance~\cite{gromov},
which we assume to be continuous with respect to the
Euclidean topology.
If~$f: \mathbb{R}^n\to \mathbb{R}$ is a function such
that, for some~$\Lambda\geq 0$,
\begin{equation}\label{sss3}
|f(x)-f(y)|\leq \Lambda d(x,y)\qquad \mbox{for all}\ x,y\in\mathbb{R}^n,
\end{equation}
then the derivatives $Z_j f$, with~$j=1,\ldots, m$,
exist in distributional sense,
are measurable functions and~$|(Z f)(x)|\leq\Lambda$
for a.e. $x\in\mathbb{R}^n$.
\end{teo}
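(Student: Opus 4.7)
The plan is to combine the horizontal control that the Carnot--Carath\'eodory distance affords along the integral curves of the $Z_j$'s with a standard distributional derivative argument. The underlying geometric observation is that each vector field $Z_j$ (and more generally any horizontal combination $Z_\xi := \sum_{j=1}^m \xi_j Z_j$ with $\sum_j \xi_j^2 \le 1$) has integral curves that are admissible horizontal paths of unit sub-Riemannian speed. Hence, if $\Phi^\xi_t$ denotes the flow of $Z_\xi$, the curve $s \mapsto \Phi^\xi_s(x)$ joins $x$ to $\Phi^\xi_t(x)$ along an admissible horizontal trajectory of length at most $|t|$, and therefore $d(x,\Phi^\xi_t(x)) \le |t|$. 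The hypothesis~\eqref{sss3} then yields
\begin{equation*}
|f(\Phi^\xi_t(x)) - f(x)| \le \Lambda |t|
\qquad\text{for every } x \in \R^n, \; t \in \R.
\end{equation*}

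I would then exploit this estimate in the sense of distributions. For $\varphi \in C^\infty_c(\R^n)$ and a fixed index $j \in \{1,\dots,m\}$, the change of variables $y = \Phi^j_t(x)$ gives
\begin{equation*}
\int_{\R^n} \frac{f(\Phi^j_t(x)) - f(x)}{t}\, \varphi(x)\, dx
= \int_{\R^n} f(y)\, \frac{\varphi(\Phi^j_{-t}(y))\,J_{-t}(y) - \varphi(y)}{t}\, dy,
\end{equation*}
where $J_{-t}$ is the Jacobian determinant of $\Phi^j_{-t}$. Sending $t \to 0$, dominated convergence identifies the limit of the right-hand side with $-\int f(Z_j\varphi + \varphi\,\dive Z_j)\, dy$, which is precisely the distributional pairing $\langle Z_j f, \varphi\rangle$. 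Meanwhile the left-hand side is bounded in modulus by $\Lambda \|\varphi\|_{L^1}$, by the estimate above. It follows that $Z_j f$ extends to a bounded linear functional on $L^1$, whence $Z_j f \in L^\infty(\R^n)$ with $\|Z_j f\|_\infty \le \Lambda$; in particular each $Z_j f$ is measurable.

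To upgrade the scalar bounds $|Z_j f| \le \Lambda$ into the sharper vectorial estimate $|(Zf)(x)| \le \Lambda$ for almost every $x$, I would rerun the same argument with $Z_\xi$ in place of $Z_j$, for arbitrary smooth $\xi \colon \R^n \to \R^m$ with $\sum_j \xi_j^2 \le 1$ pointwise: the same flow-based estimate yields $|Z_\xi f| = |\sum_j \xi_j Z_j f| \le \Lambda$ in $L^\infty$. Taking the supremum over a countable dense family of such $\xi$, including smooth approximations of the measurable optimal direction $\xi_j(x) = Z_j f(x)/|(Zf)(x)|$ on $\{(Zf) \neq 0\}$, one concludes that $\sum_j (Z_j f(x))^2 \le \Lambda^2$ for a.e.\ $x$.

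The main technical obstacle I anticipate is precisely this last step: the measurable-selection / density argument that turns the bound for every smooth admissible $\xi$ into a pointwise almost-everywhere bound on the horizontal gradient; one must approximate the a.e.-defined optimal direction by smooth fields in a way stable enough to pass the supremum through the $L^\infty$ bound. A subsidiary but important point is the standing hypothesis on continuity of $d$ with respect to the Euclidean topology, which I would use both to justify working with the Euclidean-based distributional calculus and to obtain the local uniform control on $\Phi^j_{-t}$ and on the Jacobians $J_{-t}$ that underlies the dominated convergence step.
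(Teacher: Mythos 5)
The paper does not prove Theorem~\ref{Lip}; it states it and cites Garofalo--Nhieu~\cite{GN}, so there is no internal proof to compare against. Your flow-based difference-quotient argument is, however, the natural route and is essentially the one in~\cite{GN}, and the core of it is correct: the bound $d(x,\Phi^j_t(x))\le|t|$, the change of variables producing $-\int f\,(Z_j\varphi+\varphi\,\dive Z_j)\,dy$ in the limit, and the resulting $L^1$-bound $|\langle Z_jf,\varphi\rangle|\le\Lambda\|\varphi\|_{L^1}$ giving $Z_jf\in L^\infty$ with $\|Z_jf\|_{L^\infty}\le\Lambda$ are all sound.

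The ``main technical obstacle'' you flag in the final step is, in fact, not there: you do not need $x$-dependent horizontal fields, measurable selection, or smooth approximations of the optimal direction. For a \emph{constant} vector $\xi\in\R^m$ with $|\xi|\le1$ the field $Z_\xi=\sum_j\xi_jZ_j$ is smooth, its integral curves are admissible horizontal curves of sub-unit speed, so $d(x,\Phi^\xi_t(x))\le|t|$ and the identical distributional argument gives $\big\|\sum_j\xi_jZ_jf\big\|_{L^\infty}\le\Lambda$. Running this over a countable dense subset of the closed Euclidean unit ball of $\R^m$, discarding a single null set, and then using that for each fixed $x$ the map $\xi\mapsto\sum_j\xi_jZ_jf(x)$ is a continuous linear functional on $\R^m$, one gets
$$ \sup_{|\xi|\le1}\Big|\sum_{j=1}^m\xi_j\,Z_jf(x)\Big|=\Big(\sum_{j=1}^m(Z_jf(x))^2\Big)^{1/2}=|Zf(x)|\le\Lambda\qquad\text{a.e.} $$
directly. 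Two smaller points worth recording: the flows $\Phi^j_t$ need not exist globally (the $Z_j$ may be incomplete), but this is harmless because you only use them for $|t|$ small on a neighbourhood of $\supp\varphi$, which is compact; and the assumed continuity of $d$ is exactly what makes $f$ continuous (hence locally bounded) and so supplies the dominating function needed to pass $t\to0$ on the right-hand side.
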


With the notation introduced in~\eqref{ewiegrbgv567u6576u},
we consider the linear operator
\begin{equation}\label{LHO}
Lg=\sum_{j=1}^m Z_j g\, \mathrm{div}{Z_j} +\Delta_{Z} g
+ Z_0 g.
\end{equation}
We remark that
\begin{equation}\label{HOR1}
Z_j(fg)=\sum_{i=1}^m Z_i^j \partial_i(fg)=
\sum_{i=1}^m Z_i^j f \partial_i g+\sum_{i=1}^m Z_i^j g\partial_i f=
fZ_j g+gZ_j f.\end{equation}
Therefore
\begin{equation*}
Z_jZ_j(fg) = Z_j (fZ_j g+gZ_j)=f Z_jZ_j g+gZ_jZ_j f+2Z_j fZ_jg,
\end{equation*}
that is, recalling~\eqref{ewiegrbgv567u6576u}, 
\begin{equation}\label{HOR2}
\Delta_Z(fg) = f \Delta_Z g + g\Delta_Z f+2\sum_{j=1}^m Z_j fZ_jg.
\end{equation}
Moreover, \eqref{HOR1} implies that
\begin{equation}\label{ngrhnrhth54865u8}
Z_0 (fg)= fZ_0 g + gZ_0f.
\end{equation}
In view of~\eqref{LHO},
\eqref{HOR1}, \eqref{HOR2} and~\eqref{ngrhnrhth54865u8},
and recalling~(4'), we define
\begin{align}\label{Gam}
\Gamma(f,g):=\sum_{j=1}^m Z_j f Z_j g.
\end{align}
Notice that
$$\Gamma(f)=\sum_{j=1}^m (Z_j f)^2= |Zf|^2.$$

We can now consider the diffusion Markov
triple~$(\mathbb{R}^n,\mu,\Gamma)$.
We observe that, with this choice, properties (1),(2) and (3) are satisfied. 
\smallskip

We now prove that condition~(4) is also satisfied.
For this, 
using an integration by parts, we point out that,
for any~$f\in C^\infty(\R^n)$ and any~$\varphi\in C^\infty_c(\R^n)$,
\begin{eqnarray*}
&& \int_{\R^n} Z_j f\,\varphi\,d\mu =\sum_{i=1}^n \int_{\R^n} Z_i^j \,\partial_i f\,\varphi\,e^\eta\,dx
=-\sum_{i=1}^n \int_{\R^n} f\, \partial_i(Z_i^j \,\varphi\,e^\eta)\,dx \\
&&\qquad =-\sum_{i=1}^n \int_{\R^n} f\, (\partial_iZ_i^j \,\varphi\,e^\eta
+Z_i^j \,\partial_i\varphi\,e^\eta+Z_i^j \,\varphi\,\partial_i\eta\,e^\eta)\,dx \\
&&\qquad=
-\int_{\R^n} f\, (\varphi\,{\rm div}Z_j
+Z_j \varphi+Z_j\eta \,\varphi)\,d\mu . 
\end{eqnarray*}
Taking~$\varphi:=Z_j g$, we thereby conclude that,
for every $f\in C^{\infty}(\mathbb{R}^n)$
and $g\in C_c^{\infty}(\mathbb{R}^n)$ (and, more generally,
for every~$f\in\mathrm{Lip}(\mathbb{R}^n)$
and~$g\in\mathrm{Lip}_c(\mathbb{R}^n)$ by a density argument),
it holds that
\begin{eqnarray*}
\int_{\mathbb{R}^n} \Gamma(f,g)\ d\mu &=&\sum_{j=1}^m
\int_{\mathbb{R}^n}  Z_j f \,Z_j g\,d\mu\\&=&
-\int_{\mathbb{R}^n} f\left(\sum_{j=1}^m Z_j g\,
\mathrm{div}{Z_j} +\Delta_{Z} g+Z_0g\right) \,d\mu\\
&=&-\int_{\mathbb{R}^n} f\,Lg\,d\mu ,
\end{eqnarray*}
which is~(4) in this setting.
\smallskip

We now prove condition~(5). We denote by $d(x)=d(x,0)$, and we consider
a function~$\Phi\in C^{\infty}(\bbR, [0,1])$,
with~$|\Phi'(t)|\leq 1$ for any~$|t|\in [1/8, 1/4]$, and
\begin{equation*}
\Phi(t):=\left\{\begin{array}{lll}
1 & \mbox{if}& |t|\leq 1/8,\\
0 & \mbox{if}& |t|\geq 1/4.
\end{array}
\right.
\end{equation*}
For every $k\in\mathbb{N}$, we define~$
\xi_k(x):=\Phi(d(x)^2/k^2)\in \mathrm{Lip}_c(\mathbb{R}^n)$.
Then $(\xi_k)_{k\in\mathbb{N}}$ is an increasing sequence with~$
\xi_k(x)\in [0,1]$ and~$\xi_k(x)\to 1$ as~$k\to+\infty$
for every $x\in\mathbb{R}^n$.
Moreover, we observe that~\eqref{sss3} in Theorem~\ref{Lip}
is satisfied taking~$f:=d$ with~$\Lambda=1$, and therefore we have 
that~$|Z d|\le 1$.
As a consequence
$$ |\Gamma(\xi_k)|= |Z\xi_k| = \frac{2|\Phi'(d^2/k^2)|}{k^2}d
|Zd|\leq \frac{1}{k},
$$
which completes the proof of~(5).
\smallskip

Notice that also (1'), (2'), (3'), (4'), (5') and (6') easily follow
from the very definition of~$\mathcal{A}$, $L$ and~$\Gamma$.

We claim that also (7') holds. Indeed, if~$f\in{\mathcal{A}}$
with~$\Gamma(f)=0$, then, by definition,
$$
\sum_{j=1}^m (Z_j f)^2 =0, $$
and so we have that~$Z_j f=0$ for any~$ j=1,\ldots, m$.
Thus, also all iterated derivatives vanish.
Then, the conclusion follows, since, by the H\"ormander condition,
every $\partial_{x_i}f$ can be written as a linear combination
of iterated derivatives. Therefore, $f$ is a constant function,
and so condition~(7') is satisfied.

\medskip

We now describe some interesting applications of the setting introduced above. \\

\subsection{Riemannian Manifolds} Let $(M,g)$ be a connected
Riemannian manifold of dimension $n$ equipped with the standard Levi-Civita connection $\nabla$ and let $G\in C^2(M)$. 
As customary in Riemannian geometry,
we define the gradient vector of~$f\in C^{\infty}(M)$
as the vector field whose coordinates are
\[
\nabla^i f=g^{ij}\partial_{x_j} f, \quad {\mbox{ for any }}\,1\leq i\leq n,
\]
where $g^{ij}$ are the coefficients of the inverse
matrix~$(g_{ij})_{1\leq i,j\leq n}$, and the repeated indices
notation has been used.
We consider the Markov triple $(M, \mu,\Gamma)$, where 
\[
\Gamma(f,g):=\nabla^i f\partial_{x_i} g.
\]
and 
\[
\mu:=e^{-G} dV.
\]
Here $dV$ denotes the Riemannian volume element,
namely, in local coordinates,
\begin{equation*}
dV = \sqrt{|g|}\,dx^1\wedge \dots \wedge dx^m,
\end{equation*}
where~$\{ dx^1,\dots,dx^n\}$ is the basis of~$1$-forms
dual to the vector basis~$\{\partial_1,\dots,\partial_n\}$.
The Laplace-Beltrami operator~$\Delta_g$ is defined on~$
f\in C^{\infty}(M)$ as
\[
\Delta_g f=\frac{1}{\sqrt{|g|}}\partial_{x_i}\left(\sqrt{|g|}g^{ij}\partial_{x_j}
f\right),
\]
whereas the Hessian matrix $\nabla^2 f$ of
a smooth function~$f$ is defined as
the symmetric~$2$-tensor given in a local patch by
\begin{equation*}
(\nabla^2 f)_{ij}=\partial^2_{ij}f-\Gamma^k_{ij}\partial_k f,\end{equation*}
where~$\Gamma^k_{ij}$ are the  
Christoffel symbols, namely
$$ 
\Gamma_{ij}^k=\frac12 g^{hk} \left( \partial_i g_{hj} +\partial_j g_{ih} -\partial_h g_{ij} \right) .$$
Given a tensor~$A$,
we also define its norm by~$|A|=\sqrt{A A^*}$, being~$A^*$
the adjoint of~$A$. 
\smallskip

As proved in \cite[1.11.10]{BLG} and \cite[1.16.4]{BLG}, we see that
\begin{equation}\label{0swo:1}
Lf=\Delta_gf-\left\langle \nabla G,\nabla f\right\rangle_g
\end{equation}
and
\begin{equation}\label{G GA2}
\Gamma_2(f)=|\nabla^2 f|+\mathrm{Ric}(L)(\nabla f,\nabla f),
\end{equation}
where $\mathrm{Ric}(L)$ is a symmetric tensor defined from the
Ricci tensor~$\mathrm{Ric}_g$ by
\[
\mathrm{Ric}(L)=\mathrm{Ric}_g+\nabla^2 G.
\]
Observing that
\[
\Gamma\left(\Gamma(f)^{\frac{1}{2}}\right)=|\nabla|\nabla f||^2,
\]
we use~\eqref{G GA2} and conclude that 
\begin{equation} \label{G GA3}
\Gamma_2(u)-\Gamma\left(\Gamma(u)^{\frac{1}{2}}\right)=
|\nabla^2 u|+\mathrm{Ric}(L)(\nabla u,\nabla u)-|\nabla|\nabla u||^2.
\end{equation}
Consequently,
for any stable weak solution~$u\in C^{\infty}(M)$ to 
\begin{equation}\label{EQM}
Lu+F(u)=0 \quad{\mbox{ in }}\; M,\end{equation} the
Poincar\'e inequality in~\eqref{GF} of Theorem~\ref{eqnlin}
reads as follows:
\begin{align}\label{GFRiem}
\int_M\Big(|\nabla^2 u|+\mathrm{Ric}(L)(\nabla u,\nabla u)
-|\nabla|\nabla u||^2\Big)\varphi^2 d\mu\leq \int_{M} |\nabla u|^2 
|\nabla \varphi|^2 d\mu,
\end{align}
for any~$\varphi\in C^{\infty}_c(M)$.

In particular, on~$\R^n$ equipped with the flat Euclidean metric,
and for the usual Laplacian~$\Delta$,
inequality \eqref{GFRiem} reads as
$$
\int_{\R^n}\Big(|\nabla^2 u|^2+\nabla^2 G(\nabla u,\nabla u)
-|\nabla|\nabla u||^2\Big)\varphi^2 d\mu\leq \int_{\R^n}
|\nabla u|^2|\nabla \varphi|^2 d\mu,
$$
for any~$\varphi\in C^{\infty}_c(\R^n)$,
which is precisely the inequality already proved
in~\cite{FarHab,CNP,CNV,FNP}.

Furthermore, if~$G:= 0$ then~\eqref{GFRiem} was proved
in~\cite{fsv1, fsv2, FMV}, whereas the general case seems to be
new in the literature.
\smallskip

In our setting,
we take~$\mathcal{A}_0:=\mathrm{Lip}_c(M)$
and~$\mathcal{A}:=\mathrm{Lip}(M)$,
where~$f\in\mathrm{Lip}(M)$ if
\begin{equation}\label{LIL0}
\sup_{x\ne y\in M}\frac{f(x)-f(y)}{d(x,y)}<+\infty,\end{equation}
being~$d$ the distance defined in~\eqref{DIS}.
Notice that, if we fix~$x_0\in M$ and we set~$
\rho(x):=d(x,x_0)$, it holds that
\begin{equation}\label{LIL}
\rho(x)-\rho(y)\le d(x,y).\end{equation}
Indeed, by~\eqref{DIS}, for any~$\eps>0$ there exists~$f_\eps$
with~$\Gamma(f_\eps)\le1$ such that~$\rho(x)\le \eps+ f_\eps(x)
-f_\eps(x_0)$,
and therefore
$$ \rho(x)-\rho(y)\le
\eps+ f_\eps(x)-f_\eps(x_0) - \big( f_\eps(y)-f_\eps(x_0)\big)
=\eps+f_\eps(x)-f_\eps(y)\le\eps+d(x,y).$$
{F}rom this, taking~$\eps$ as small as we wish, we obtain~\eqref{LIL}.

Then, comparing \eqref{LIL0} and \eqref{LIL}, we see that~$\rho\in
\mathrm{Lip}(M)=\mathcal{A}$. 
\smallskip

We also assume that
$$ \lambda\delta^{ij}\le g^{ij}\le \frac{ \delta^{ij} }\lambda,$$
for some~$\lambda\in(0,1]$. In this way, if~$|\cdot|_E$ is the Euclidean norm
of a vector, it holds that
$$ \lambda \,|v|_E^2 =\lambda \delta^{ij}v_i v_j\le g^{ij} v_i v_j.$$
Then, by~\eqref{DIS}, 
for any~$\eps>0$ there exists~$\tilde f_\eps$
with~$\Gamma(\tilde f_\eps)\le1$ such that
\begin{eqnarray*}
d(x,y) &\le&
\eps+ \tilde f_\eps(x)-\tilde f_\eps(y)\\
&=&
\eps+ \int_0^1 \partial_{x_i}\tilde f_\eps(x+ty)\cdot(x_i-y_i)\,dt\\
&\le&
\eps+ \frac1\lambda\,
\int_0^1 \sqrt{g^{ij}\partial_{x_i}\tilde f_\eps(x+ty)\,
\partial_{x_j}\tilde f_\eps(x+ty)}\,|x-y|_E\,dt
\\&=&
\eps+ \frac1\lambda\,
\int_0^1 \sqrt{\Gamma(\tilde f_\eps)(x+ty)}\,|x-y|_E\,dt
\\&=&
\eps+ \frac{|x-y|_E}\lambda.\end{eqnarray*}
Hence, taking~$\eps$ arbitrary small,
it follows that~$d(x,y)\le\frac{|x-y|_E}\lambda$.
Recalling~\eqref{LIL}, we thereby conclude that~$|\nabla \rho|$,
and therefore~$\Gamma(\rho)$, is bounded by a universal constant depending on~$\lambda$.
Consequently, the assumptions of Theorem~\ref{main1}
are satisfied. Then, using Theorem~\ref{main1}
and~\eqref{G GA3}, we obtain that, in this setting,
if the curvature dimension condition~$CD(K,\infty)$ holds true
for some~$K\geq 0$, and~$u$ is a stable solution
with~$\int_M |\nabla u|^2\,d\mu<+\infty$,
then:
\begin{align}
&K>0\Longrightarrow {\mbox{$u$ is constant in~$M$}};\\
\label{KPOSI0-B}
&K=0 \Longrightarrow
|\nabla^2 u|+\mathrm{Ric}(L)(\nabla u,\nabla u)-|\nabla|\nabla u||^2
=0\qquad\mu-\mbox{a.e. in }M.
\end{align}
To grasp a geometric flavor of~\eqref{KPOSI0-B}, one can fix a point~$p
\in M$ with~$\nabla u(p)\ne0$
and consider normal coordinates at~$p$ for which
\begin{equation}\label{NORMALC}
g^{ij}(p)=\delta^{ij},\qquad \partial_{x_k}g^{ij}(p)=0\qquad{\mbox{
and }}\qquad\Gamma_{ij}^k(p)=0,\end{equation}
see e.g. page~55 in~\cite{2009}.
Then, the level set~$S$ of~$ u$
passing through~$p$ is locally a submanifold of~$M$ of codimension~$1$,
endowed with a Riemannian structure induced by that of~$M$
(namely if~$v,w\in T_p S\subseteq T_p M$
one can consider~$g(v,w)$ as defining a metric on~$S$).
Consequently, in view of~\eqref{NORMALC},
we can reduce the Riemannian term~$
|\nabla^2 u|-|\nabla|\nabla u||^2$ to its Euclidean counterpart,
which, due to the classical Sternberg-Zumbrun identity
(see formula~(2.1) of~\cite{SZ2})
is larger than~$K^2\,|\nabla u|^2$,
being~$K^2$ the sum of the square of the eigenvalues
of the second fundamental form of~$S$, according to the induced Riemannian structure,
see  Proposition~18 in~\cite{FMV}.
Therefore, by~\eqref{KPOSI0-B},
if the Ricci tensor is nonnegative, it follows
that the second fundamental form of~$S$ at~$p$ vanishes,
and the Ricci tensor must vanish at~$p$ as well.

Submanifolds with vanishing second fundamental form
are called totally geodesic (see e.g. page~104 in~\cite{ONE}
or Proposition~1.2 in~\cite{RADE})
and are characterized by the property that
any geodesic on the submanifold is also a geodesic on the ambient manifold.

\subsection{Carnot groups} We recall that
a Carnot group~$\mathbb{G}$
is a connected Lie group whose Lie algebra~$\mathcal{G}$
is finite dimensional and stratified of step~$s\in\mathbb{N}$.
Precisely, there exist linear subspaces $V_1,\dots,V_s$ of $\mathcal{G}$ such that
\[\mathcal{G}=V_1\oplus \cdots \oplus V_s\]
with
\[[V_1,V_{i-1}]=V_{i}\;\; \mbox{ if }2\leq i\leq s  \quad
\mbox{ and }\quad [V_1,V_s]=\{0\}.\]
Here $[V_1,V_i]:=\mathrm{span}\{[a,b]: a\in V_1,\ b\in V_i\}.$
Since~$\mathcal{G}$ is stratified,
then every element of~$\mathcal{G}$ is the linear combination
of commutators of elements of~$V_1$.
We refer to~\cite{BLU} for a complete introduction to the subject.

Let $\mathrm{dim}(V_1)=m$ and $Z=(Z_1,\ldots, Z_m)$ be
a basis of~$V_1$.
The family~$Z$ satisfies the H\"ormander condition. Moreover, in
this setting,
\begin{equation}\label{feejiyh5}
\Gamma(f,g)=\sum_{i=1}^m Z_i f Z_i g\quad {\mbox{ and }}
\quad L f= \Delta_Z f.
\end{equation}
In order to compute $\Gamma_2$ we will use the
following Bochner-type
formula proved in~\cite[Proposition 3.3]{G} coupled
with~\cite[Lemma 3.1]{G}:

\begin{teo}\label{bochner}
Let $u$ be a smooth function. Then,
\begin{eqnarray*}
&&\frac12 \Delta_Z |Zu|^2\\
&=&\|Z^2 u \|^2+\sum_{j=1}^m Z_j u\, 
Z_j(\Delta_Zu)+ 2\sum_{i,j=1}^m 
Z_j u [Z_i,Z_j]Z_iu +\sum_{i,j=1}^m Z_j u [Z_i,[Z_i,Z_j]]u ,
\end{eqnarray*}
where $Z^2 u$ denotes the horizontal Hessian
matrix associated to the family~$Z$,
namely the~$m\times m$ matrix whose elements are given by~$
u_{ij}:=Z_iZ_j u$, with~$i,j=1,\dots, m$.
\end{teo}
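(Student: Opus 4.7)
The plan is to establish the identity by a direct pointwise computation, expanding $\Delta_Z$ via the Leibniz rule and then commuting the horizontal vector fields in order to convert $Z_iZ_iZ_j u$ into $Z_j(\Delta_Z u)$ plus commutator corrections. First I would apply the Leibniz-type identity $Z_iZ_i(f^{2})=2(Z_i f)^{2}+2fZ_iZ_i f$ with $f:=Z_j u$, and then sum over $i,j\in\{1,\dots,m\}$ and divide by two, obtaining
\begin{equation*}
\frac{1}{2}\Delta_Z |Zu|^{2} \;=\; \sum_{i,j=1}^{m}(Z_iZ_j u)^{2} \;+\; \sum_{i,j=1}^{m} Z_j u\cdot Z_iZ_iZ_j u.
\end{equation*}
By the very definition of the horizontal Hessian $Z^{2}u$ with entries $u_{ij}=Z_iZ_j u$, the first sum is exactly $\|Z^{2}u\|^{2}$, so that term is already in place and no further work is needed for it.

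The core algebraic step is to rewrite $Z_iZ_iZ_j u$ in terms of $Z_jZ_iZ_i u$ together with commutator corrections. For this I would iterate the elementary identity $Z_i Z_j=Z_j Z_i+[Z_i,Z_j]$ three times: first commute the outer $Z_i$ past $Z_j$ to get $Z_iZ_iZ_j u=Z_iZ_jZ_i u+Z_i\bigl([Z_i,Z_j]u\bigr)$; then commute inside the first summand to obtain $Z_iZ_jZ_i u=Z_jZ_iZ_i u+[Z_i,Z_j]Z_i u$; finally, since $[Z_i,Z_j]$ is itself a smooth vector field, apply the same commutator identity once more to write $Z_i\bigl([Z_i,Z_j]u\bigr)=[Z_i,Z_j]Z_i u+[Z_i,[Z_i,Z_j]]u$. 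Combining these three steps yields the pointwise identity
\begin{equation*}
Z_iZ_iZ_j u \;=\; Z_jZ_iZ_i u \;+\; 2\,[Z_i,Z_j]Z_i u \;+\; [Z_i,[Z_i,Z_j]]u.
\end{equation*}
Summing over $i$ collapses $\sum_i Z_jZ_iZ_i u$ to $Z_j(\Delta_Z u)$, and then multiplying by $Z_j u$ and summing over $j$ reproduces the three remaining summands in the statement, so that the four contributions assemble exactly as claimed.

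The main obstacle is purely bookkeeping: one has to keep track of the fact that $[Z_i,Z_j]$ lives in general in $V_2$ (and $[Z_i,[Z_i,Z_j]]$ in $V_3$), and ensure that each application of a commutator is treated as a legitimate first-order differential operator acting on the smooth function to its right. Apart from this care, the proof is entirely algebraic and pointwise; it does not invoke the measure $\mu$, the H\"ormander condition, or any integration by parts, which is consistent with the fact that the analogous identity in~\cite{G} is established in exactly this frame-wise manner.
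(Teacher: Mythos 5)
Your argument is correct. The paper itself does not prove this Bochner-type identity; it simply cites Proposition~3.3 and Lemma~3.1 of Garofalo~\cite{G}, so there is no in-paper proof to compare against. Your derivation is the standard one and reaches exactly the stated formula: the Leibniz step gives
\begin{equation*}
\tfrac12\Delta_Z|Zu|^2=\sum_{i,j}(Z_iZ_j u)^2+\sum_{i,j} Z_j u\,Z_iZ_iZ_j u,
\end{equation*}
and the threefold application of $Z_iZ_j=Z_jZ_i+[Z_i,Z_j]$ (twice to pass $Z_j$ across the two copies of $Z_i$, once to pass $Z_i$ across the first-order vector field $[Z_i,Z_j]$) yields
\begin{equation*}
Z_iZ_iZ_j u=Z_jZ_iZ_i u+2[Z_i,Z_j]Z_i u+[Z_i,[Z_i,Z_j]]u,
\end{equation*}
so summing over $i$, contracting with $Z_j u$, and summing over $j$ produces $\sum_j Z_j u\,Z_j(\Delta_Z u)$ together with the two commutator sums in the statement. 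One point worth making explicit, and which your expansion quietly but correctly uses, is that $\|Z^2u\|^2$ means the Frobenius norm $\sum_{i,j}(Z_iZ_j u)^2$ of the \emph{non-symmetrized} horizontal Hessian, which is exactly what the Leibniz expansion isolates and is consistent with the paper's convention $u_{ij}:=Z_iZ_j u$.
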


Now, let $u\in C^{\infty}(\mathbb{G})$ be a stable solution
to
\begin{equation}\label{8768bnfkle}
\Delta_Z u+F(u)=0\quad {\mbox{ in }}\mathbb{G}.\end{equation}
Therefore, recalling~\eqref{feejiyh5} and using Theorem \ref{bochner},
\begin{align*}
&\Gamma_2(u) =\frac12L\Gamma (u) -\Gamma(u,Lu)=
\frac{1}{2}\Delta_Z |Zu|^2- \Gamma(u, \Delta_Z u)\\
&\quad
= \|Z^2 u \|^2+\sum_{j=1}^m Z_j u\, 
Z_j(\Delta_Zu)+ 2\sum_{i,j=1}^m 
Z_j u [Z_i,Z_j]Z_iu +\sum_{i,j=1}^m Z_j u [Z_i,[Z_i,Z_j]]u
- \Gamma(u, \Delta_Z u)\\
&\quad=
\|Z^2 u \|^2+ 2\sum_{i,j=1}^m 
Z_j u [Z_i,Z_j]Z_iu +\sum_{i,j=1}^m Z_j u [Z_i,[Z_i,Z_j]]u
\\&\quad
=\|Z^2 u \|^2+\mathcal{R}(u),
\end{align*}
where
\[
\mathcal{R}(u):=2\sum_{i,j=1}^m Z_j u [Z_i,Z_j]Z_iu +\sum_{i,j=1}^m Z_j u [Z_i,[Z_i,Z_j]]u.
\]
Therefore, if~$u\in C^{\infty}(\mathbb{G})$ is a
stable solution to~\eqref{8768bnfkle}, inequality \eqref{GF} reads as
\begin{align}\label{Carnot}
\int_{\mathbb{G}}\left(\|Z^2 u \|^2-|Z|Z u||^2
+\mathcal{R}(u)\right) \varphi^2\ dx\leq 
\int_{\mathbb{G}} |Z u|^2 |Z \varphi|^2\ dx,
\end{align}
for any~$\varphi\in C^{\infty}_c(\mathbb{G})$. 

Formula~\eqref{Carnot} generalizes to general Carnot groups
the Poincar\'e inequality obtained in~\cite{FV1} in the Heisenberg group
and in~\cite{PV} in the Engel group
(we refer the reader to~\cite{BLU} for the definitions,
and we remark that the divergence of the Heisenberg and Engel vector fields
vanish in the setting of~\eqref{ewiegrbgv567u6576u}).
In particular, in the case of the Heisenberg group,
formula~\eqref{Carnot} here reduces to formula~(7) in~\cite{FV1},
and in the case of the Engel group, formula~\eqref{Carnot} here reduces to the
formula in Proposition~3.7 of~\cite{PV} and~${\mathcal{R}}$ here coincides
with~${\mathcal{J}}$ in Theorem~1.1 of~\cite{PV}.
In its full generality, our formula~\eqref{Carnot} seems to be new in the literature.

In addition, the distance
in~\eqref{DIS} coincides with that of Carnot-Carath\'eodory 
in this setting, see~\cite{BLG}, and so~\eqref{DIS2} holds true in this case.
For completeness, we state~\eqref{Carnot} and we apply Theorem~\ref{main1}
to obtain this original result:
\begin{teo}
Let~$\mathbb{G}$ be
a Carnot group whose Lie algebra~$\mathcal{G}=
V_1\oplus \cdots \oplus V_s$ is stratified of step~$s$,
with~$V_1$ generated by the basis of vector fields~$(Z_1,\dots,Z_m)$
that satisfy the H\"ormander condition.
Let~$u\in C^\infty(\mathbb{G})$ be a
stable weak solution to~$\Delta_Zu+F(u)=0$
in~$\mathbb{G}$. Then,
$$
\int_{\mathbb{G}}\left(\|Z^2 u \|^2-|Z|Z u||^2
+\mathcal{R}(u)\right) \varphi^2\ dx\leq 
\int_{\mathbb{G}} |Z u|^2 |Z \varphi|^2\ dx,
$$for any~$\varphi\in C^{\infty}_c(\mathbb{G})$. 

Assume also that
\begin{eqnarray*}&& \mathcal{R}(u)\ge 0\\{\mbox{and }}&&
\int_{\mathbb{G}} |Zu|^2\, dx <\infty.\end{eqnarray*}
Then
\begin{eqnarray*}&& \mathcal{R}(u)= 0\\{\mbox{and }}&& \|Z^2 u\|^2=|Z|Z u||^2
\mbox{ a.e. in }\mathbb{G}.
\end{eqnarray*}
\end{teo}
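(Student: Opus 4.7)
My proposal is to treat the two displays separately. The first one is precisely inequality \eqref{Carnot}, which has already been derived in the discussion preceding the theorem as an immediate consequence of Theorem~\ref{eqnlin} applied in the Carnot-group setting, using the identifications $\Gamma_2(u)=\|Z^2 u\|^2+\mathcal{R}(u)$ (Bochner formula via Theorem~\ref{bochner}) and $\Gamma(\Gamma(u)^{1/2})=|Z|Zu||^2$. Nothing new is required for that part beyond rewriting Theorem~\ref{eqnlin} in Carnot notation.

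For the rigidity conclusion my plan is to mimic the $K=0$ branch of the proof of Theorem~\ref{main1}, but with one essential modification: since a general Carnot group need not satisfy $CD(0,\infty)$ in the Bakry--\'Emery sense, the nonnegativity of $\Gamma_2(u)-\Gamma(\Gamma(u)^{1/2})$ cannot be invoked from Theorem~\ref{QUI0}. Instead, I would produce it by hand. Applying Cauchy--Schwarz in the index $j$ to the identity $Z_k |Zu|^2 = 2\sum_j Z_j u \, Z_kZ_j u$ gives the Kato-type bound $|Z|Zu||^2\le \|Z^2 u\|^2$ pointwise a.e.\ on $\{|Zu|>0\}$, and combined with the standing hypothesis $\mathcal{R}(u)\ge 0$ this yields $\|Z^2 u\|^2-|Z|Zu||^2+\mathcal{R}(u)\ge 0$ a.e.\ on $\mathbb{G}$.

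With this pointwise nonnegativity in hand, I would test the already-proved inequality \eqref{Carnot} against the cutoff $\varphi_k:=\widetilde{\Phi}(\rho_k\xi_k)+\xi_k$ constructed in the proof of Theorem~\ref{main1}. The intrinsic distance \eqref{DIS} coincides with the Carnot--Carath\'eodory distance on $\mathbb{G}$, so by Theorem~\ref{Lip} the function $\rho(x)=d(x,0)$ is Lipschitz and admits an approximating family $\rho_k\in\mathcal{A}$ satisfying \eqref{DIS2}. Running the limits $k\to\infty$ and then $R\to\infty$ exactly as in the chain \eqref{ineqk:PRE}--\eqref{S-12:03}, with the integrability hypothesis $\int_{\mathbb{G}}|Zu|^2\,dx<\infty$ killing the annular right-hand side, produces
$$\int_{\mathbb{G}}\bigl(\|Z^2 u\|^2-|Z|Zu||^2+\mathcal{R}(u)\bigr)\,dx\le 0.$$
Since the integrand is a sum of two a.e.\ nonnegative quantities, each summand must vanish a.e., giving the two asserted identities.

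The main obstacle I anticipate is not algebraic but technical: the construction of the approximating family $\rho_k\in\mathrm{Lip}(\mathbb{G})$ with $\|\Gamma(\rho_k)\|_\infty$ uniformly bounded and $\rho_k\to\rho$ a.e. This is a standard consequence of the Lipschitz character of $\rho$ together with an intrinsic mollification (e.g.\ by group convolution against a smooth kernel), but it is the one step that is specific to the Carnot setting and not immediate from the general Markov triple axioms; once it is in place, the rest of the argument is a faithful transcription of the proof of Theorem~\ref{main1}, with the Kato inequality plus the assumption $\mathcal{R}(u)\ge 0$ playing the role the $CD(0,\infty)$ condition plays there.
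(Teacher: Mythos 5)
Your proposal is correct and follows the same global strategy as the paper, but it is noticeably more careful at precisely the point where the paper's one-line justification (``we apply Theorem~\ref{main1}'') is too terse to be literal. Theorem~\ref{main1} is stated under the $CD(K,\infty)$ hypothesis, which Carnot groups of step~$\geq 2$ do not in general satisfy, and the $CD$ condition enters the proof of Theorem~\ref{main1} exactly once: through Theorem~\ref{QUI0}, to establish that the integrand $\Gamma_2(u)-\Gamma\big(\Gamma(u)^{1/2}\big)$ is nonnegative, which is what turns the one-sided integral bound into a pointwise vanishing. You correctly identify that this is the only ingredient which must be replaced. Your substitute---the pointwise Kato bound $|Z|Zu||^2\le\|Z^2u\|^2$ obtained from Cauchy--Schwarz applied to $Z_k|Zu|^2=2\sum_j Z_ju\,Z_kZ_ju$, combined with the standing hypothesis $\mathcal{R}(u)\ge 0$---gives $\Gamma_2(u)-\Gamma\big(\Gamma(u)^{1/2}\big)=\|Z^2u\|^2-|Z|Zu||^2+\mathcal{R}(u)\ge 0$ as a sum of two nonnegative terms. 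Once this is in place, the remainder of your argument (testing~\eqref{Carnot} against the cutoffs $\varphi_k=\widetilde{\Phi}(\rho_k\xi_k)+\xi_k$, passing $k\to\infty$ and $R\to\infty$ using $\int_{\mathbb{G}}|Zu|^2\,dx<\infty$, and concluding that each nonnegative summand vanishes a.e.) is an exact transcription of the chain~\eqref{ineqk:PRE}--\eqref{S-12:03}. This is indeed what the theorem's hypothesis $\mathcal{R}(u)\ge 0$ is there for, so you have supplied the mechanism that the paper leaves implicit. The Poincar\'e part of the statement is simply~\eqref{Carnot}, as you note. Your only flagged worry---constructing the approximating family $\rho_k$ with $\|\Gamma(\rho_k)\|_\infty$ bounded---is addressed in the paper by citing that the intrinsic distance~\eqref{DIS} coincides with the Carnot--Carath\'eodory distance, so the Lipschitz property and~\eqref{DIS2} carry over directly; your group-convolution mollification would do the same job.
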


Now, we prove that, for a particular family of Carnot groups,
formula~\eqref{Carnot} provides a geometric inequality
for every stable solution to~\eqref{8768bnfkle}.
Model filiform groups are the Carnot groups with the simplest
Lie brackets possible while still having arbitrarily large step,
see~\cite{Mon02}. They have previously been investigated in connection
with non-rigidity of Carnot groups \cite{O08}, quasiconformal mappings between
Carnot groups \cite{Xia05}, geometric control theory \cite{Mon02} and geometric measure theory \cite{PS}.

The formal definition is as follows:

\begin{defi}\label{filiform}
Let $n\geq 2$. The \emph{model filiform group of step $n-1$}
is the Carnot group~$\mathbb{E}_{n}$ whose Lie algebra~$
\mathcal{E}_{n}$ admits a basis~$Z_{1}, \ldots, Z_{n}$
satisfying~$[Z_{i},Z_{1}]=Z_{i+1}$ for~$1<i<n$,
with all other Lie brackets among the~$Z_{i}$ equal to zero. 

The stratification of $\mathcal{E}_{n}$ is
$$ \mathcal{E}_{n}=V_{1}\oplus \cdots \oplus V_{n-1}$$
with~$V_{1}=\mathrm{Span}\{Z_{1}, Z_{2}\}$ and $V_{i}=\mathrm{Span}\{Z_{i-1}\}$ for $1<i<n$.
\end{defi}

Proceeding exactly as in \cite[formula (19)]{FV1}, we get
\begin{equation*}
|Z|Z u||^2=\frac{1}{|Z u|^2}
\left\langle H_Zu \,Z u, Zu\right\rangle\quad
\mbox{in}\  \{Z u\neq 0\},
\end{equation*}
where
\[
H_Z u:=Z^2 u(Z^2 u)^T.
\]
Whenever $P\in \{u=k\}\cap \{Z u\neq 0\}$,
we can consider the smooth surface~$\{u=k\}$ and
define the intrinsic normal to~$\{u=k\}$ and the
intrinsic unit tangent direction to~$\{u=k\}$ as
\[
\nu:=\frac{Z u(P)}{|Z u(P)|}\quad \mbox{and}\quad v:=\frac{(Z_2 u(P), -Z_1u(P))}{|Z u(P)|},
\]
respectively.
We observe that \cite[Lemma 2.1]{FV1} only depends on
the fact that $\mathrm{dim} V_1=2$ and $\mathrm{dim} V_2=1$,
which still hold in every model filiform Carnot group.
Therefore, the following result holds:

\begin{lem}
On $\{u=k\}\cap \{Z u \neq 0\}$, it holds that
\begin{align}\label{ddw}
\|Z^2 u\|^2-\left\langle (H_Z u) \nu, \nu\right\rangle=|Z u|^2\left[h^2+ \left(p+\frac{\left\langle (Hu) v, \nu\right\rangle}{|Z u|}\right)^2\right]
\end{align}
where
\[
h=\mathrm{div}_Z \nu=Z_1 \nu_1+Z_2 \nu_2
\]
and
\[
p=-\frac{Z_3u}{|\nabla_Z u|}.
\]
\end{lem}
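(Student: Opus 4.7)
The plan is to follow verbatim the argument of Lemma~2.1 in~\cite{FV1}, which establishes this identity in the Heisenberg group. The authors explicitly note that the proof only relies on the two structural facts~$\dim V_1=2$ and~$\dim V_2=1$, and both continue to hold in every model filiform group~$\mathbb{E}_n$ by Definition~\ref{filiform}. The first condition guarantees that~$\{\nu,v\}$ is an orthonormal basis of~$\R^2$ at every point where~$Zu\neq 0$, and the second condition ensures that the commutator~$[Z_2,Z_1]$ is a scalar multiple of the single vector~$Z_3$, which is the source of the term~$p=-Z_3u/|Zu|$.

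Concretely, I would set~$A:=Z^2u$, the~$2\times 2$ horizontal Hessian. Since~$\{\nu,v\}$ is orthonormal in~$\R^2$, the Frobenius norm decomposes as~$\|A\|^2=\|A^T\nu\|^2+\|A^T v\|^2$, and by the definition~$H_Zu=AA^T$ the first summand equals~$\langle H_Zu\,\nu,\nu\rangle$. Hence the left-hand side of~\eqref{ddw} reduces to~$\|A^T v\|^2$, and expanding this vector in the orthonormal basis~$\{\nu,v\}$ gives~$\|A^T v\|^2=\langle A^T v,\nu\rangle^2+\langle A^T v,v\rangle^2$, which already matches the shape of the right-hand side of~\eqref{ddw} after multiplication and division by~$|Zu|^2$.

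The identification of the two components is then the content of a direct calculation. The tangential component~$\langle A^T v,v\rangle$ works out to~$|Zu|\,h$, which follows by writing out~$h=\mathrm{div}_Z\nu=Z_1(Z_1u/|Zu|)+Z_2(Z_2u/|Zu|)$ and grouping the Hessian contributions with~$v$-weights (the~$Z_i|Zu|$ terms cancel by the very definition of~$v$ as the rotation of~$\nu$). The normal component~$\langle A^T v,\nu\rangle$ splits into the symmetric part~$\langle (Hu)v,\nu\rangle$ and an antisymmetric remainder produced by~$u_{12}-u_{21}=[Z_1,Z_2]u=-Z_3u$; once divided by~$|Zu|$, this antisymmetric remainder is precisely~$p$.

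The main obstacle is the careful bookkeeping of the non-symmetric part of~$A$, since the horizontal Hessian is not symmetric in this setting. All cancellations hinge on the single bracket relation~$[Z_2,Z_1]=Z_3$; Lie brackets involving the higher strata~$V_3,\ldots,V_{n-1}$ never enter the computation because only second-order horizontal derivatives of~$u$ appear in~\eqref{ddw}. This is exactly why the identity transfers from~\cite{FV1} to arbitrary model filiform groups without any additional argument, as anticipated in the remark preceding the statement.
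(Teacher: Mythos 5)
Your proposal is correct and takes essentially the same route as the paper, which gives no independent proof of this lemma and only records the remark (just before the statement) that \cite[Lemma 2.1]{FV1} depends solely on $\dim V_1=2$ and $\dim V_2=1$, both of which persist in every model filiform group. Your elaboration of what the [FV1] computation actually does---writing $\|Z^2u\|^2-\langle H_Zu\,\nu,\nu\rangle=\|(Z^2u)^Tv\|^2$ via the orthonormal frame $\{\nu,v\}$, identifying the tangential projection with $|Zu|\,h$, and isolating the commutator contribution $[Z_2,Z_1]=Z_3$ as the source of $p$---accurately captures why the argument transfers unchanged.
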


Plugging \eqref{ddw} into \eqref{Carnot} we get the following geometric Poincar\'e inequality:
\begin{align*}
\int_{\{Zu\neq 0\}}\left(|Z u|^2\left[h^2+ \left(p+\frac{\left\langle (Hu) v, \nu\right\rangle}{|Z u|}\right)^2\right]
+\mathcal{R}(u)\right) \varphi^2\ dx\leq 
\int_{\mathbb{E}_{n}} |Z u|^2 |Z \varphi|^2\ dx,
\end{align*}
for any~$\varphi\in C^{\infty}_c(\mathbb{E}_{n})$.

We summarize this statement and that of Theorem~\ref{main1}
in the following original result:

\begin{teo}\label{saq}
Let~$\mathbb{E}_{n}$ be a 
model filiform group of step $n-1$, as in Definition~\eqref{filiform}.
Let~$u\in C^\infty(\mathbb{E}_{n})$ be a
stable weak solution to~$\Delta_Zu+F(u)=0$
in~$\mathbb{E}_{n}$. Then,
\begin{align*}
\int_{\{Zu\neq 0\}}\left(
|Z u|^2\left[h^2+ \left(p+\frac{\left\langle (Hu) v, \nu\right\rangle}{|Z u|}\right)^2\right]
+\mathcal{R}(u)\right) \varphi^2\ dx\leq 
\int_{\mathbb{E}_{n}} |Z u|^2 |Z \varphi|^2\ dx,
\end{align*}
for any~$\varphi\in C^{\infty}_c(\mathbb{E}_{n})$. 

Assume also that
\begin{eqnarray*}&&
\mathcal{R}(u)
\ge 0\\{\mbox{and }}&&
\int_{\mathbb{E}_{n}} |Zu|^2\, dx <\infty.\end{eqnarray*}
Then:
\begin{eqnarray}&& \nonumber
\mathcal{R}(u)
= 0\quad{\mbox{and }}\\&&
\left.\begin{matrix}
h=0\\
\\
p+\displaystyle\frac{\left\langle (Hu) v, \nu\right\rangle}{|Z u|}
=0\quad\end{matrix}\right\}\quad
\mbox{ a.e. in } {\mathbb{E}_{n}}\cap\{ Zu\ne0\}.\label{09222133}
\end{eqnarray}
\end{teo}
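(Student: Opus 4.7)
The plan is to combine the Carnot-group Poincar\'e inequality~\eqref{Carnot} (already derived from Theorem~\ref{eqnlin} in the preceding subsection) with the pointwise decomposition~\eqref{ddw}, and then to invoke the exhaustion procedure of Theorem~\ref{main1} to obtain the rigidity.

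The displayed inequality is essentially read off the paragraph preceding the statement: on $\{Zu \neq 0\}$, the identity $|Z|Zu||^2 = \langle (H_Zu)\nu,\nu\rangle$ together with~\eqref{ddw} rewrites
\[
\|Z^2u\|^2 - |Z|Zu||^2 \;=\; |Zu|^2\!\left[h^2 + \Big(p + \tfrac{\langle(Hu)v,\nu\rangle}{|Zu|}\Big)^2\right].
\]
Inserting this into~\eqref{Carnot} and restricting the left-hand side to $\{Zu\neq 0\}$ is legitimate because every summand of $\mathcal{R}(u)$ carries a factor $Z_j u$ and hence vanishes on $\{Zu=0\}$, while the bracketed term is multiplied by $|Zu|^2$; this yields the first assertion.

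For the rigidity, the strategy is to feed into the inequality a sequence $\varphi_k \in C^\infty_c(\mathbb{E}_n)$ with $\varphi_k \to 1$ pointwise and $\int_{\mathbb{E}_n} |Zu|^2 |Z\varphi_k|^2\,dx \to 0$. Since by hypothesis $\mathcal{R}(u) \geq 0$, the integrand on the left of the Poincar\'e inequality is pointwise nonnegative on $\{Zu\neq 0\}$; hence once the right-hand side is driven to $0$ along $\varphi_k$, the left-hand integrand must vanish a.e.\ on $\{Zu\neq 0\}$. The three nonnegative summands $\mathcal{R}(u)$, $|Zu|^2 h^2$, and $|Zu|^2\,(p + \langle(Hu)v,\nu\rangle/|Zu|)^2$ then vanish separately, giving~\eqref{09222133}.

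Such a sequence is exactly the one built in the proof of Theorem~\ref{main1}: I would take $\varphi_k := \widetilde{\Phi}(\rho_k\xi_k) + \xi_k$, with $\xi_k$ the exhaustion from axiom~(5) (refined as in~\eqref{xi2-bis}), $\rho_k\in\mathcal{A}$ approximating the intrinsic distance $\rho$ (which on $\mathbb{E}_n$ is the Carnot-Carath\'eodory distance, so~\eqref{DIS2} holds as remarked earlier), and $\Phi = \Phi_R$ the cutoff at scale $R$. The energy estimate~\eqref{S-12:01}, applied in this setting, gives
\[
\limsup_{k\to\infty}\int_{\mathbb{E}_n} |Zu|^2 |Z\varphi_k|^2\,dx \;\leq\; 9\,C_0\int_{\{R\le\rho\le R+1\}}|Zu|^2\,dx,
\]
and the right-hand side tends to $0$ as $R\to\infty$ by the integrability hypothesis. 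The main obstacle I anticipate is purely bookkeeping: checking that the cutoff argument of Theorem~\ref{main1} transfers verbatim, which it does because the structural conditions~(1')--(7') and~\eqref{DIS2} were already verified for the Carnot setup earlier in the section.
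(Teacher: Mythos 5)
Your proposal is correct and follows essentially the same route as the paper: the displayed inequality is obtained by inserting the identity~\eqref{ddw} (together with $|Z|Zu||^2 = \langle H_Zu\,\nu,\nu\rangle$ on $\{Zu\neq 0\}$) into~\eqref{Carnot}, and the rigidity is obtained by running the cutoff/exhaustion argument of Theorem~\ref{main1} with $\varphi_k=\widetilde\Phi(\rho_k\xi_k)+\xi_k$, using $\mathcal{R}(u)\ge 0$ in place of a curvature bound to make the integrand nonnegative. In the paper this last step is simply delegated to the preceding unnamed Carnot-group theorem (itself derived from Theorem~\ref{main1}), so your reconstruction just unpacks what the paper cites.
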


The two equations in~\eqref{09222133} on~$ \{ Zu\ne0\}$
can be seen as ``intrinsic geodesic equations''
on the noncritical level sets of the solution~$u$. 

\subsection{Grushin plane}
For a given $\alpha\in \mathbb{N}$, the vector fields $Z_1=\partial_x$
and $Z_2=|x|^{\alpha}\partial_y$ satisfy the H\"ormander condition
in~$\mathbb{R}^2$. We call Grushin plane the metric space~$\mathbb{G}_{\alpha}
=(\mathbb{R}^2, d)$, where $d$ is the Carnot-Carath\'eodory
distance induced
by~$Z_1$ and~$Z_2$. Background on the
Grushin plane may be found in~\cite{Bel,Mon02}.

Since $\mathrm{div}Z_1=\mathrm{div}Z_2=0$ and $Z_0=0$, then,
from what we proved for vector fields satisfying the H\"ormander condition, we get
\[
Lu=\Delta_Z u,\quad  \Gamma(f,g)=Z_1 f Z_1 g+Z_2 fZ_2 g\quad
\mbox{and}\quad d\mu= dx.
\]
For every solution $u\in C^{\infty}(\mathbb{G}_{\alpha})$
to~$\Delta_Z u+ F(u)=0$, we see that, for any~$i\in\{1,2\}$,
\begin{equation}\label{9yhn83tugrgbe}
Z_i\Delta_Z u= -F'(u)\,Z_i u. \end{equation}
Let also $Z_3:=[Z_1,Z_2]$. We observe that,
by \cite[Lemma 2.1]{FV3} and~\eqref{9yhn83tugrgbe}, we have
\begin{align}\label{ddsa}
&\Delta_Z Z_1 u=Z_1\Delta_Z u- 2Z_3Z_2 u
= -F'(u)\,Z_1 u- 2Z_3Z_2 u
\\
\label{ddsa1} {\mbox{and }}\quad
&\Delta_Z Z_2 u=Z_2\Delta_Z u+ 2Z_3Z_1 u=-F'(u)\,Z_2 u+ 2Z_3Z_1 u.
\end{align}
As a further consequence of~\eqref{9yhn83tugrgbe}, we obtain that 
\begin{equation}\begin{split}\label{rekhtbfndb}
& \Gamma(u,\Delta_Z u)=
Z_1 u Z_1\Delta_Z u + Z_2 u Z_2\Delta_Z u\\&\qquad=
-F'(u)\left( |Z_1u|^2 +|Z_2u|^2\right)
=-F'(u)|Zu|^2 .
\end{split}\end{equation}
Moreover, direct calculations give
\begin{align*}
Z_1Z_1(Z_1u)^2&=2(Z_1Z_1u)^2+2Z_1 uZ_1Z_1Z_1 u\\
&=2(Z_1Z_1u)^2+2Z_1 u \Delta_Z Z_1 u-2Z_1u Z_2Z_2Z_1u,\\
Z_2Z_2(Z_1u)^2&=2(Z_2Z_1u)^2+2Z_1u Z_2Z_2Z_1 u\\
Z_1Z_1(Z_2u)^2&=2(Z_1Z_2u)^2+2Z_2u Z_1Z_1Z_2 u,\\
{\mbox{and }}\quad
Z_2Z_2(Z_2u)^2&=2(Z_2Z_2u)^2+2Z_2 uZ_2Z_2Z_2 u
\\&=2(Z_2Z_2u)^2+2Z_2 u \Delta_Z Z_2u -2Z_2u Z_1Z_1Z_2 u.
\end{align*}
Summing up these
equalities and recalling~\eqref{ddsa} and~\eqref{ddsa1},
we get
\begin{align*}
\Delta_Z|Zu|^2&=2\|Z^2 u\|+2Z_1 u \Delta_Z Z_1 u+2Z_2 u \Delta_Z Z_2u\\
&=2\|Z^2 u\|+2Z_1u (-F'(u)Z_1 u -2Z_3Z_2 u)+2Z_2 u(-F'(u)Z_2 u+ 2Z_3Z_1 u)\\
&=2\|Z^2 u\| -2F'(u)|Zu|^2 -4Z_1uZ_3Z_2 u+ 4Z_2uZ_3Z_1 u.
\end{align*}
Using this and~\eqref{rekhtbfndb}, we
are now in position to compute~$\Gamma_2(u)$ as follows:
\begin{align*}
\Gamma_2(u)&=\frac{1}{2}L\Gamma(u)-\Gamma(u, Lu)
=\frac{1}{2} \Delta_Z|Z u|^2-\Gamma(u,\Delta_Z u)\\
&=\|Z^2 u\| -2Z_1uZ_3Z_2 u+ 2Z_2uZ_3Z_1 u.
\end{align*}
Therefore, if~$u\in C^{\infty}(\mathbb{G}_{\alpha})$ is a
stable solution to $\Delta_Z u+ F(u)$, inequality \eqref{GF} reads as
\begin{align*}
\int_{\mathbb{G}_{\alpha}}\Big(\|Z^2 u \|^2-|Z|Z u||^2
-2Z_1uZ_3Z_2 u+ 2Z_2uZ_3Z_1 u\Big) \varphi^2\ dx\leq 
\int_{\mathbb{G}_{\alpha}} |Z u|^2 |Z \varphi|^2\ dx,
\end{align*}
for any~$\varphi\in C^{\infty}_c(\mathbb{G}_{\alpha})$, which coincides
with
formula~(1.10) in Theorem~1.1 of~\cite{FV3}.

We conclude pointing out that, since the
proof of Theorem~\ref{saq} is based on the commutator relations
of the vectors~$Z_1$ and~$Z_2$,
the same result also holds in the Grushin plane
(compare with~(1.15) and~(1.16) of~\cite{FV3}).

\vfill

\end{document}